\renewcommand{\PrintDOI}[1]{\doi{#1}}
\makeatletter\renewcommand{\PrintDatePosted}[1]{\unskip\ (\print@date)}\makeatother
\newtheorem{THM}{Theorem}[section]
\newtheorem{LEM}[THM]{Lemma}
\theoremstyle{definition}
\newtheorem{EX}[THM]{Example}
\colorlet{colorA}{orange!70!white}
\colorlet{colorB}{cyan!60!black}
\renewcommand{\phi}{\varphi}
\newcommand{\join}{\lor}
\newcommand{\meet}{\land}
\newcommand{\sub}{\subseteq}
\newcommand{\sm}{\smallsetminus}
\newcommand{\menge}[1]{\left\{#1\right\}}
\newcommand{\abs}[1]{\left\lvert#1\right\rvert}
\newcommand{\tn}[1]{\textnormal{#1}}
\renewcommand{\S}{\mathcal{S}}
\renewcommand{\P}{\mathcal{P}}
\newcommand{\F}{\mathcal{F}}
\newcommand{\A}{\mathcal{A}}
\newcommand{\curlyle}{\preccurlyeq}
\renewcommand{\le}{\leqslant}
\renewcommand{\ge}{\geqslant}\renewcommand{\geq}{\geqslant}
\title{Trees of tangles in abstract separation~systems}
\date{20th January 2021}
\author{Christian Elbracht \and Jakob Kneip \and Maximilian Teegen}
\begin{document}
\maketitle
\begin{abstract}\noindent
    We prove canonical and non-canonical tree-of-tangles theorems for abstract separation systems that are merely structurally submodular.
    Our results imply all known tree-of-tangles theorems for graphs, matroids and abstract separation systems with submodular order functions,
    with greatly simplified and shortened proofs.
\end{abstract}

\section{Introduction}

Tangles were introduced in~\cite{GMX} by Robertson and Seymour as part of their graph minor project. Since then tangles have become one of the central objects of study in the field of graph minor theory. The general idea of tangles is to capture highly connected substructures in graphs not by listing the vertices and edges belonging to that structure but instead, indirectly, by listing for every low-order separation of the graph on which side of that separation the highly connected structure lies. In this way, the tangle `points towards' the dense structure rather than describing it explicitly. The paradigm shift brought about by Robertson and Seymour, then, was to view tangles themselves as highly cohesive objects, and working directly with these sets of oriented separations rather than with any substructures possibly captured by them.

The central theorem in the theory of tangles, which was established by Robertson and Seymour together with the notion of tangles, is the following:

\begin{restatable}[\cite{GMX}]{THM}{GMXthm}\label{thm:GMX}
    Every graph has a tree-decomposition displaying its maximal tangles.
\end{restatable}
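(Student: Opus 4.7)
The plan is to derive this classical theorem as a corollary of the abstract tree-of-tangles theorem for structurally submodular separation systems that this paper establishes, rather than to reprove it from scratch. The basic strategy is to pass from the graph $G$ to an appropriate separation system, invoke the abstract result to obtain a nested distinguishing set, and then translate that set back into a tree-decomposition of~$G$.

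First, I would fix a sufficiently large integer $k$ exceeding the order of every maximal tangle of $G$, and consider the system of all oriented separations of $G$ of order strictly less than $k$. Since the vertex-connectivity order function of a graph is submodular, this separation system is in particular structurally submodular. The maximal tangles of $G$ then correspond precisely to a canonical family of profiles of this separation system, and two distinct maximal tangles must be distinguished by some separation of order less than $k$ (otherwise one would be a subset of the other, contradicting maximality).

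Next, I would apply the paper's abstract tree-of-tangles theorem to this family of profiles, producing a nested set $N$ of separations of $G$ that distinguishes every pair of maximal tangles efficiently. By the well-known correspondence between nested sets of separations of a graph and tree-decompositions (originally due to Carmesin, Diestel, Hamann and Hundertmark), $N$ yields a tree-decomposition $(T,\mathcal{V})$ of~$G$ whose adhesion separations are exactly those in~$N$. Finally, I would verify that $(T,\mathcal{V})$ displays the maximal tangles: each maximal tangle consistently orients all of $N$ and hence points to a unique node of~$T$, and distinct maximal tangles point to distinct nodes because $N$ distinguishes them.

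The main obstacle, I expect, is not any step individually---each is either a direct application of the abstract theorem or standard graph-theoretic machinery---but rather arranging that the separations produced by the abstract theorem are of sufficiently low order to yield a tree-decomposition with the features traditionally demanded of one that \emph{displays} the maximal tangles. Overcoming this typically requires choosing the distinguishing separations inside each abstract application to be order-minimal among all candidates, a property that the canonical version of the abstract theorem should supply automatically once the correct submodular order function is fed into it.
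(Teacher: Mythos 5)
There is a genuine gap, and it sits exactly where the paper says the difficulty lies. Your reduction fixes one large $k$ and treats the maximal tangles of $G$ as a family of profiles of the single system $\vS_k$; but a maximal $l$-tangle with $l<k$ is an orientation of $\vS_l$ only, and by its maximality it does \emph{not} extend to an orientation of $\vS_k$. So the maximal tangles of $G$ do not form a set of profiles of any one structurally submodular separation system, and \cref{thm:Daniel} cannot be applied to them as you describe. Moreover, even for tangles of a single order, \cref{thm:Daniel} is purely structural and cannot guarantee that the distinguishing separations are of minimal order --- the paper stresses precisely this when explaining why \cref{thm:Daniel} does not imply \cref{thm:GMX}. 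Your final paragraph defers both problems to ``the canonical version of the abstract theorem'', but the theorems that do deliver efficiency across all orders (\cref{thm:sequence}, or \cref{thm:Profiles_canon} from~\cite{ProfilesNew}) require a \emph{sequence} of systems $S_1\sub\dots\sub S_n$ (or profiles in $U$ of all orders) together with a robustness hypothesis, and you neither set up that framework nor verify robustness for graph tangles. In short, the step ``apply the abstract theorem to get a nested set that distinguishes every pair of maximal tangles efficiently'' is exactly the content that still has to be proved, not a black box you can invoke.

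For comparison, the paper's own proof does not route through \cref{thm:Daniel} at all: for each pair $P,P'$ of distinct maximal tangles (of possibly different orders) it forms the set $A_{P,P'}$ of separations that \emph{efficiently} distinguish them, proves in \cref{lem:GMXsplinters} that this family splinters --- using the tangle property~(T) together with submodularity of the order function to produce a low-order corner separation in $A_{P,P'}\cup A_{Q,Q'}$ whenever two candidates cross --- and then applies \cref{thm:splinter} to pick a nested set meeting every $A_{P,P'}$; converting that nested set into a tree-decomposition is the standard step you correctly describe. If you want to keep your ``derive it from an abstract theorem'' strategy, the correct vehicle is \cref{thm:sequence} applied to the compatible sequence $\vS_1\sub\vS_2\sub\dots$ of all separations of order $<k$, after checking that the set of maximal tangles is robust in the sense defined there; with a single fixed $k$ and without an efficiency mechanism the argument does not go through.
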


\cref{thm:GMX} roughly says that the highly cohesive regions in a graph are arranged in a tree-like structure. The `maximal' in~\cref{thm:GMX} relates to the order of the tangles: the tree-decomposition found by~\cref{thm:GMX} displays the graph's tangles at every level of coarseness.

The original proof of~\cref{thm:GMX} by Robertson and Seymour in~\cite{GMX} is fairly involved and uses as tools multiple non-trivial results about separations in graphs, for instance, the existence of certain `tie-breaker' functions. Since then, the theory of tangles has moved on considerably, and shorter and more elementary proofs of~\cref{thm:GMX} have been found. The shortest proof to date is due to Carmesin \cites{ShortToT,Diestel5}, who utilises the fact that the separations needed for the tree-decomposition in~\cref{thm:GMX} behave well under appropriately defined joins and meets when taken to be of minimal order.

Carmesin, Diestel, Hundertmark, and Stein established the following strengthening of~\cref{thm:GMX}:

\begin{THM}[\cite{confing}]\label{thm:GMX-Canonical}
    Every graph has a canonical tree-decomposition displaying its maximal tangles.
\end{THM}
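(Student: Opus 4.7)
The plan is to build upon Carmesin's short proof of~\cref{thm:GMX} and make every selection canonical, exploiting the submodularity of the order function of a graph on its low-order separations. The overall strategy is to extract a canonical nested set $N$ of separations that distinguishes the maximal tangles, and then invoke the standard correspondence between nested sets of separations of a graph and tree-decompositions.

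First, I would define, for each unordered pair $\{\tau,\tau'\}$ of distinct maximal tangles of the graph, the set $D(\tau,\tau')$ consisting of all separations that distinguish $\tau$ from $\tau'$ and have minimum order among such distinguishers; call a separation \emph{efficient} if it lies in $D(\tau,\tau')$ for some such pair. Let $D$ be the set of all efficient separations. Because $D$ is defined purely in terms of the tangle structure, every automorphism of the graph permutes the maximal tangles and hence sends $D$ to itself: the set $D$ is manifestly canonical.

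The technical heart of the proof is to show that $D$ admits a canonical nested refinement $N \sub D$ that still distinguishes every pair of distinct maximal tangles. If two efficient separations $s,t \in D$ cross, submodularity of the order function yields that the two "corner" pairs of $s$ and $t$ have total order at most that of $s$ and $t$. Since $s$ and $t$ are already minimum, both pairs of corners again attain this minimum. Carmesin's uncrossing lemma then shows that for any tangles separated by $s$ or $t$, one of the corner separations separates the same tangles; hence a corner can replace $s$ (or $t$) without losing distinguishing power. The task is to perform all these replacements \emph{simultaneously}, or otherwise symmetrically, so that the outcome does not depend on any ordering of crossings: one standard device is to replace $s$ by the unique corner of $s$ and $t$ that lies on the $\tau$-side of both $s$ and $t$, where $\tau$ is an appropriate tangle picked by the pair $\{s,t\}$ itself, and to verify that this shifting operation commutes with itself. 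Iterating produces a nested subset $N \sub D$ constructed canonically from the graph.

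The last step is routine: a nested set of separations of a graph gives rise, by the standard tree-of-nested-separations construction, to a tree-decomposition whose parts correspond to the "stars" in $N$. Since this construction is canonical in $N$ and $N$ was produced canonically from the graph, the resulting tree-decomposition is invariant under automorphisms of the graph; and since $N$ distinguishes all maximal tangles, the tree-decomposition displays them. The main obstacle, as is typical for canonical tree-of-tangles results, is the uncrossing step: any procedure that picks crossings one at a time and resolves them in sequence will generally break canonicity, so one needs an uncrossing scheme in which the order of operations is irrelevant, or alternatively a direct proof that some canonically defined subset of $D$ is already nested.
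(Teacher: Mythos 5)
Your outline stalls exactly at the step you yourself flag as the main obstacle: you never actually produce a canonical uncrossing scheme, you only postulate that a ``standard device'' of replacing $s$ by a corner chosen by the pair $\{s,t\}$ commutes with itself and can be iterated. That is the entire difficulty, and it is not routine: a corner of $s$ and $t$ may cross a third efficient separation, the proposed replacement depends on which tangle the pair is supposed to ``pick'', and resolving crossings pairwise in any sequence is precisely the kind of procedure that breaks canonicity. Moreover, one intermediate claim is false as stated: submodularity only gives $\abs{\vs\join\vt}+\abs{\vs\meet\vt}\le\abs{s}+\abs{t}$ for each pair of opposite corners, not that all corners of two crossing efficient separations again attain the minimum order; in general some corners have strictly larger order, and to show that \emph{some} corner again lies in one of the two candidate sets you need the tangle/profile property and, when the two pairs of tangles have different distinguishing orders, robustness --- this is exactly the content of \cref{lem:GMXsplinters,lem:eff_splinter,lem:profiles_hierarchically} and it does not follow from order arithmetic alone.

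The paper avoids iterative uncrossing altogether and realises the alternative you mention in passing, namely a direct proof that a canonically defined subset is already nested. Among the candidate sets $A_{P,P'}$ of lowest distinguishing order, the set of \emph{extremal} separations of their union (those having a $\le$-maximal orientation) is invariant under automorphisms and is automatically nested once the family splinters hierarchically (\cref{lem:ex_nested}); these separations go into $N$, each remaining candidate set is replaced by its subset of separations nested with $N$ (nonempty by a minimal-crossing argument using \cref{lem:fish}), and one recurses (\cref{thm:splinter_hierarchically}). Verifying the hierarchical splinter condition for the sets of efficient distinguishers (\cref{lem:profiles_hierarchically}, or \cref{lem:GMXsplinters} in the graph case) is where submodularity, consistency, the profile property and robustness enter; \cref{thm:Profiles_canon} and hence \cref{thm:GMX-Canonical} then follow. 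Unless you supply a concrete, order-independent replacement rule together with proofs that it is well defined, terminates, and preserves efficient distinguishing, your argument has a genuine gap at its technical heart; the extremal-separation device is the missing idea.
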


Here, `canonical' means that every automorphism of the graph acts on the decomposition tree.
In other words, \cref{thm:GMX-Canonical} uses only invariants of the graph---in particular no tie-breaker---to find the desired tree-decomposition.

A careful analysis of~\cite{confing}'s proof of \cref{thm:GMX-Canonical} conducted in~\cite{ProfilesNew} resulted in another shift of paradigm, similarly to the shift brought about by~\cite{GMX}. Much in the same way as tangles made it possible indirectly to capture substructures in graphs that were traditionally described more directly by sets of vertices or edges, and to treat them in a unified framework, it turned out that tangles themselves could be described, unlike in their definition given by Robertson and Seymour in \cite{GMX}, without reference to vertices or edges.

Indeed, the only information needed about a graph's tangles to prove \cref{thm:GMX-Canonical} is how its separations relate to each other, that is, which separations are nested or cross. Formally, the separations of the graph are turned into a poset together with an involution, and all subsequent tools and theorems in~\cite{ProfilesNew} are then formulated for these posets. This new notion of `abstract tangles' yielded not only a cleaner proof of \cref{thm:GMX-Canonical} in~\cite{ProfilesNew}, but also made the theory of tangles applicable to a wider range of combinatorial structures.

This novel way of working with so-called `abstract separation systems' is summarised in~\cite{AbstractSepSys}, and has yielded multiple generalisations and strengthenings of various theorems in the theory of tangles in both the finite (see~\cites{DualityAbstract, AbstractTangles, ProfilesNew, TreeSets, ProfileDuality, SeparationsOfSets, ToTviaTTD, CanonSubmod}) and the infinite (see~\cites{InfiniteSplinters, InfiniteTangles, ProfiniteSS, TreeSets, TreelikeSpaces}) setting. In this generalised framework tree-decompositions and tangles of graphs are generalised to nested sets of separations and `profiles', respectively. Working with abstract separation systems rather than with graphs makes many of the results in this new theory of tangles applicable to give notions of highly cohesive substructures in settings other than just graphs, such as in matroids or in image segmentation~(\cites{Duality2,ML}).

However one condition not expressed in terms of relations between the separations remained in use throughout the series of abstractions of~\cref{thm:GMX} implemented in~\cite{confing} and~\cite{ProfilesNew}: all of these works assumed that the separation systems of interest came with a submodular order function. Likewise, Carmesin's short proof of~\cref{thm:GMX} in~\cite{ShortToT} also leverages the fact that the order of separations of graphs is a submodular function.

This last non-structural aspect of tree-of-tangles theorems was disposed of in~\cite{AbstractTangles}: in that paper Diestel, Erde, and Weißauer replaced the order function with a purely structural notion of submodularity which can be expressed solely in terms of the lattice structure of the separation system. In doing so they established the most general and widely applicable variant of \cref{thm:GMX} to date:

\begin{restatable}[{\cite{AbstractTangles}*{Theorem~6}}]{THM}{DanielThm}\label{thm:Daniel}
    Let $ \vS $ be a structurally submodular separation system and $ \P $ a set of profiles of $ S $. Then $ S $ contains a nested set that distinguishes~$ \P $.
\end{restatable}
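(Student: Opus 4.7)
The plan is to proceed by induction on $|\P|$, following the spirit of Carmesin's short proof of~\cref{thm:GMX} but relying on structural submodularity in place of an order function. The base case $|\P|\le 1$ is vacuous.

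For the inductive step I would select a separation $s_0\in S$ that is extremal in the following sense: fix any profile $P_0\in\P$ and choose $s_0\in P_0$ to be $\le$-minimal among all $s\in P_0$ that orient at least one other profile of $\P$ oppositely to $P_0$. Such an $s_0$ exists because $\P$ contains distinct profiles, which are necessarily distinguished by some separation in $P_0$. The orientation of $s_0$ then splits $\P$ into the two strictly smaller sets $\P^+:=\{P\in\P : s_0\in P\}$ and $\P^-:=\{P\in\P : -s_0\in P\}$.

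Next, I would pass to the sub-systems $S_{\le s_0}:=\{s\in S : s\le s_0\}$ and $S_{\le -s_0}$. Each of these inherits structural submodularity, since whichever of $s\vee t$ and $s\wedge t$ lies in $S$ also lies below $s_0$ as soon as $s,t$ do. The restrictions of the profiles in $\P^\pm$ to these sub-systems are again profiles, and the induction hypothesis applied on each side yields nested sets $N^+\sub S_{\le s_0}$ and $N^-\sub S_{\le -s_0}$ distinguishing $\P^+$ and $\P^-$ respectively. The output $N:=N^+\cup N^-\cup\{s_0\}$ is then automatically nested: for $n\in N^+$ and $n'\in N^-$ we have $n\le s_0\le -n'$, while $s_0$ is nested with every member of $N^\pm$ by construction. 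Moreover $N$ distinguishes $\P$, since $s_0$ handles pairs across the $\P^+/\P^-$ divide and the sets $N^\pm$ handle pairs within.

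I expect the main obstacle to be the sub-induction step, namely verifying that the restricted profiles in $\P^+$ remain distinguishable using only separations in $S_{\le s_0}$. Given $P,Q\in\P^+$ distinguished in $S$ by some $s\in P$ with $-s\in Q$, the natural candidate is $s\wedge s_0$, which need not a priori lie in $S$. Here structural submodularity provides that at least one of $s\wedge s_0$ and $s\vee s_0$ belongs to $S$; the $\le$-minimality of $s_0$ should then be used to rule out the unhelpful case by noting that $s\vee s_0 \in P_0$ together with a suitable consistency/profile-property argument would yield a strictly smaller distinguishing separation inside $P_0$, contradicting the choice of $s_0$. Carefully combining this extremality argument with the profile axiom to check that the uncrossed separation still distinguishes the required pair of profiles is where the bulk of the delicate work will lie.
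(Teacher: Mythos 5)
There is a genuine gap at exactly the step you flag yourself, and the orientation bookkeeping around it is also reversed. The small point first: if $t\ne s_0$ has an orientation $\vt\le\vs_0$, then every consistent orientation containing $\vs_0$ must contain $\vt$ (otherwise $\tv$ together with $\vs_0$ violates consistency), so the separations in your $S_{\le s_0}$ are oriented identically by \emph{all} profiles in $\P^+$ and can never distinguish two of them; the subsystem relevant for $\P^+$ is $\{t\in S:\vt\ge\vs_0\text{ for some orientation }\vt\}$, and dually for $\P^-$. That swap is repairable (such a subsystem is indeed structurally submodular, and the nestedness of $N^+\cup N^-\cup\{s_0\}$ survives). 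What is not repaired by anything in your outline is the claim your induction stands on: that any two distinct $P,Q\in\P^+$ are distinguished by some separation with an orientation $\ge\vs_0$. If this fails, the restrictions of $P$ and $Q$ to the subsystem coincide and the recursion never separates them, so the final set does not distinguish $\P$.

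To see why your sketched fix does not close this, take $s$ with $\vs\in P$, $\sv\in Q$ crossing $s_0$. The only corners of $s$ and $s_0$ on the required side are $\vs_0\join\vs$ and $\vs_0\join\sv$, and if either lies in $S$ it does distinguish $P$ and $Q$ (profile property in the one profile, consistency in the other --- essentially the computation in \cref{lem:Danielsplinter}). But structural submodularity may instead hand you only the two meets $\vs_0\meet\vs$ and $\vs_0\meet\sv$: these lie strictly below $\vs_0$, hence by consistency belong to every profile of $\P^+$ and distinguish nothing there. The minimality of $\vs_0$ yields no contradiction in this situation: the meets do lie in $P_0$, but nothing forces any profile of $\P$ to contain their inverses, so they need not distinguish $P_0$ from any other profile, and the minimal choice of $\vs_0$ is untouched; moreover, in exactly this case $\vs_0\join\vs\notin S$, so the phrase ``$s\vee s_0\in P_0$'' has no meaning, since $P_0$ orients only $S$. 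This is precisely the difficulty the paper's route avoids: in \cref{lem:Danielsplinter} one uncrosses a distinguisher of one pair of profiles with a distinguisher of another pair, and it suffices that \emph{some} corner lies in $S$, because whichever corner appears can be shown to distinguish one of the two pairs; \cref{thm:splinter} then assembles the nested set without ever anchoring the construction at a single fixed separation. To salvage your divide-and-conquer you would need an additional argument (or a different choice of $s_0$) establishing the claim above, and it is not clear that it holds in general.
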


The relevant separation systems in graphs are all `structurally submodular', and therefore \cref{thm:Daniel} still applies to tangles in graphs. On the other hand there are separation systems that are structurally submodular but cannot be represented by graph separations~(\cite{SeparationsOfSets}). In particular \cref{thm:Daniel} can also be applied to separation systems which, unlike separations in graphs, do not come with any order function, such as arbitrary bipartitions of sets. This is a marked step forward from its predecessor \cref{thm:GMX}, whose original proof made heavy use of the order of particular separations.

However there is a trade-off involved in \cref{thm:Daniel}'s wider applicability: it does not imply \cref{thm:GMX}. Indeed, \cref{thm:Daniel} applied to a graph produces a tree-decomposition which displays just the graph's $ k $-tangles for arbitrary but fixed $k$. This is a significant weakening of \cref{thm:GMX}, which finds a decomposition displaying the graph's maximal tangles for all tangles orders simultaneously. Moreover, the tree-decomposition found by \cref{thm:GMX} is {\em efficient} in the sense that for every pair of tangles distinguished by the tree-decomposition, the separation in the decomposition distinguishing that pair of tangles is of the lowest possible order. Since \cref{thm:Daniel} makes only structural assumptions so as to be applicable to separation systems without any order function, \cref{thm:Daniel} cannot guarantee that the separations used by the nested set to distinguish a particular pair of tangles are of minimal order.

In this paper we bridge the gap between \cref{thm:GMX} and \cref{thm:Daniel} by establishing the following tree-of-tangles theorem which combines the upsides of both \cref{thm:GMX} and \cref{thm:Daniel}, i.e., which is as widely applicable as \cref{thm:Daniel} while still being as powerful and efficient as \cref{thm:GMX} when applied to tangles in graphs:

\begin{restatable}{THM}{SequencesThm}\label{thm:sequence}
    If $\mathcal{S} = (S_1,\dots,S_n)$ is a compatible sequence of structurally submodular separation systems inside a universe $U\!$, and $\P$ is a robust set of profiles in $\mathcal{S}$, then there is a nested set $ N $ of separations in $U$ which efficiently distinguishes all the distinguishable profiles in~$\P$\!.
\end{restatable}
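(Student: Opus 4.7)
The plan is to prove the theorem by induction on the length $n$ of the sequence $\mathcal{S}$. The base case $n=1$ reduces directly to Theorem~\ref{thm:Daniel}: with only one separation system in play, every nested distinguishing set it provides is automatically efficient, since all of its separations share the same order.

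For the inductive step, I would first apply the induction hypothesis to the truncated sequence $(S_1,\dots,S_{n-1})$ to obtain a nested set $N'\subseteq U$ which efficiently distinguishes every pair of profiles in $\P$ that admits a distinguisher in some $S_i$ with $i<n$. What remains is to enlarge $N'$ by separations so as to distinguish the pairs of profiles whose minimum-order distinguisher lies in $S_n$ but not in any earlier $S_i$. For each such pair $P,P'$, I would pick an arbitrary minimum-order distinguisher $s_0\in S_n$ and uncross it against the separations $t\in N'$ one by one. Structural submodularity of $S_n$ provides, for each crossing with some $t$, a corner of $s_0$ and $t$ which again lies in $S_n$; the robustness of $\P$ is invoked to ensure that such a corner can be chosen to still distinguish $P$ from $P'$; and compatibility of the sequence guarantees that replacing $s_0$ by this corner does not change its order. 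After finitely many uncrossings the resulting separation is nested with $N'$, distinguishes $P$ from $P'$, and is still of minimum order; adding it to $N'$ and iterating over all remaining pairs yields the required set~$N$.

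The principal obstacle will be the bookkeeping at the uncrossing step: one must simultaneously maintain nestedness with all previously added separations, preserve minimality of order against the full universe $U$ rather than merely against $S_n$, and keep the distinguishing property through the submodular corner substitution. The three hypotheses---structural submodularity, compatibility, and robustness---are each needed for exactly one of these three tasks, so the heart of the proof will lie in arranging the uncrossing procedure so that all three can be invoked in the right sequence. A natural way to formalise this is through an auxiliary shifting lemma asserting that the corner substitution can be performed without introducing fresh crossings with earlier elements of $N'$; this would generalise to the sequence setting the uncrossing-and-minimality arguments used both in the proof of Theorem~\ref{thm:Daniel} and in Carmesin's short proof of Theorem~\ref{thm:GMX}.
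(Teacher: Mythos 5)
Your overall reduction---for each distinguishable pair $P,P'$ pick one efficient distinguisher so that all choices are nested---is the same as the paper's, but the mechanism is not. The paper defines $A_{P,P'}$ as the set of all separations in $S_n$ that distinguish $P$ and $P'$ efficiently, verifies in a few lines (using compatibility, robustness, consistency and the profile property) that this family \emph{splinters}, and then invokes \cref{thm:splinter}; there is no induction on $n$ and no uncrossing of a new separation against a previously fixed nested set. The reason the paper takes this route is exactly where your plan has a genuine gap: the hypotheses only ever guarantee a corner separation lying in $A_{P,P'}\cup A_{Q,Q'}$, not in the particular candidate set you need at a given moment.

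Concretely, suppose your freshly chosen $s\in A_{P,P'}\subseteq S_n$ crosses some $r\in N'$ which efficiently distinguishes another pair $Q,Q'$, with $r\in S_j$, $j<n$. Since $r$ cannot distinguish $P$ and $P'$, some orientation satisfies $\vr\in P\cap P'$, and then consistency forces both $P$ and $P'$ to orient $\rv\join\vs$ and $\rv\join\sv$ towards $\vr$; so the only corners of $r$ and $s$ that could distinguish $P$ and $P'$ are $\vr\join\vs$ and $\vr\join\sv$, and your replacement step needs one of them to lie in $\vS_n$. Nothing in the hypotheses guarantees this: if both fail to lie in $\vS_n$, compatibility only forces $\rv\join\vs$ and $\rv\join\sv$ into $S_j$, and robustness---whose ``third'' profile must contain $\rv$ and hence must be taken from the pair $Q,Q'$, not from $P,P'$---then only yields that one of these two corners lies in a profile of the pair $Q,Q'$ and therefore (by consistency of the other profile of that pair, and efficiency) in $A_{Q,Q'}$. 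The corner you can actually produce would replace $r$, an element of the nested set you have already committed to, and swapping it out may create new crossings inside $N'$, since \cref{lem:fish} only protects separations nested with both $r$ and $s$. Absorbing this alternation between the two candidate sets is precisely the content of \cref{thm:splinter}, and note that even its proof does not keep the nested choice for $A_1,\dots,A_{n-1}$ fixed: it selects a pivot and re-chooses the other elements. Two further slips, minor by comparison: there is no order function here, so ``minimum-order distinguisher'' and ``does not change its order'' must be recast as membership in the appropriate $S_i$ (and in the base case efficiency is trivial simply because $S_1$ is the only system); and applying the induction hypothesis to $(S_1,\dots,S_{n-1})$ does not directly cover pairs involving profiles of $S_n$ even when they are distinguishable at low levels, since such profiles are not profiles in the truncated sequence---this needs an explicit restriction argument, including a check that robustness survives restriction.
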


\cref{thm:sequence} includes \cref{thm:Daniel} by taking a sequence of just one separation system, and it implies \cref{thm:GMX} by taking as separation systems $ S_k $ the sets of all separations of order $ <k $ of the given graph; the resulting nested set is the set of separations of the desired tree-decomposition.

The nested set $ N $ found by \cref{thm:sequence} has to contain for every pair of profiles in $\P$ a separation from that pair's `candidate set' of all those separations which (efficiently) distinguish that pair of profiles. Thus, to prove \cref{thm:sequence}, it suffices to show that one can pick an element from each of these `candidate sets' in a nested way.

As it turns out, there is a very simple and purely structural requirement of the way these `candidate sets' interact with each other which guarantees that it is possible to pick such a nested set:

\begin{restatable}[Splinter Lemma]{LEM}{splinterThm}\label{thm:splinter}
    Let $ U $ be a universe of separations and $ \A=(A_i)_{i\le n} $ a family of subsets of~$ U.$ If $ \A $ splinters, then we can pick an element $ a_i $ from each $ A_i $ so that $ \{a_1,\dots,a_n\} $ is nested.
\end{restatable}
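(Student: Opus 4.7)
The plan is to prove the Splinter Lemma by induction on $n$. The base case $n = 1$ is immediate, since any singleton of separations is nested.

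For the induction step, the strategy is to reduce to $n-1$ sets by picking one element first and then restricting the remaining sets to elements nested with that choice. Concretely: I would select a suitable $a \in A_i$ for some distinguished index $i$ (the precise choice of $i$ depending on how the splintering condition is phrased, plausibly $i = 1$ or $i = n$), and then define, for each remaining $j$, the restricted set $A_j' := \{b \in A_j : b \text{ and } a \text{ are nested}\}$. If every $A_j'$ is non-empty and the restricted family $(A_j')_{j \ne i}$ still splinters, then the induction hypothesis yields nested choices $a_j \in A_j'$, and adjoining $a$ produces the desired nested set of size $n$.

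Thus the work reduces to two claims. First, that a valid initial $a \in A_i$ can be chosen so that every $A_j'$ is non-empty: the natural approach is to start with an arbitrary element of $A_i$ and iteratively apply the splintering condition to replace crossings with corners, at each step moving to a corner that still lies in $A_i$ and resolves at least one crossing with some $A_j$. One then needs a termination argument---say a well-ordering or counting of crossings---to obtain an $a \in A_i$ nested with at least one element of each $A_j$. Second, that the restricted family $(A_j')_{j \ne i}$ still splinters: given crossing $b \in A_j'$ and $c \in A_k'$, the splintering hypothesis produces a corner of $b$ and $c$ in some $A_\ell$, and we must show this corner also lies in $A_\ell'$, i.e.\ is nested with $a$. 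This should follow from the lattice-theoretic fact that in a universe of separations the corners of two separations each nested with a fixed separation $a$ remain nested with $a$, because joins and meets preserve being on a given side of $a$.

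The main obstacle will be the non-emptiness argument, namely, constructing the initial element $a$ with the required property; this is where the precise form of the splintering condition enters and governs exactly how the crossings are resolved and why the resolution terminates. Once $a$ is chosen, the preservation of splintering under restriction should be essentially automatic from the observation about corners and fixed-side nestedness.
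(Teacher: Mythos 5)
Your overall induction scheme is the same as the paper's: find one separation $a$ that is nested with at least one element of every other set, restrict each remaining $A_j$ to the set $A_j'$ of its elements nested with $a$, note that the restricted family is non-empty and still splinters (the fact you invoke about corners of separations nested with a fixed $a$ is exactly \cref{lem:fish}), and apply the induction hypothesis. That part of your plan is sound and matches the paper.

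The gap is in the step you yourself flag as the main obstacle: producing the initial element $a$. Your sketch---start with an arbitrary element of a fixed $A_i$ and repeatedly replace it by a corner ``that still lies in $A_i$'' until the crossings are resolved, with termination by counting crossings---does not go through. First, the splinter condition only guarantees a corner separation of $a_i$ and $a_j$ somewhere in $A_i\cup A_j$; it may always land in $A_j$, so you cannot stay inside a pre-chosen index $i$, and no single index can be distinguished in advance. Second, even when the corner does lie in $A_i$, replacing $a$ by that corner can destroy nestedness already achieved: \cref{lem:fish} protects only separations nested with \emph{both} members of the crossing pair, so an element of some other $A_k$ that was nested with $a$ but crosses $a_j$ may cross the new corner, and a naive crossing count need not decrease. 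The paper avoids any iteration by a short dichotomy: apply the induction hypothesis to $A_1,\dots,A_{n-1}$ to obtain a nested set $\{a_1,\dots,a_{n-1}\}$ with $a_i\in A_i$, fix an arbitrary $a_n\in A_n$, and observe that either some $a_i$ with $i<n$ has itself or one of its corners with $a_n$ in $A_n$ (then this $a_i$ is the desired element, being nested with every $a_j$ and with that element of $A_n$), or else for every $i<n$ the splinter condition places $a_n$ itself or a corner of $a_i$ and $a_n$ in $A_i$, so that $a_n$ is nested with at least one element of each $A_i$ and serves as the desired element. Without this (or some equivalent) argument your proof is incomplete at its central point.
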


\cref{thm:splinter}, in a sense, represents yet another step of abstraction in the theory of tangles: rather than working with the profiles themselves it works with the sets of separations distinguishing a given pair of profiles.

\cref{thm:splinter} not only implies \cref{thm:sequence}, but can also be used to prove \cref{thm:GMX} and \cref{thm:Daniel} directly. In fact \cref{thm:splinter} has a remarkably short proof (as we shall see in \cref{sec:splinter}), making it the shortest available proof of~\cref{thm:GMX} so far (see~\cref{sec:appl:GMX}). Moreover, the premise in~\cref{thm:splinter} is straightforward to check, and~\cref{thm:splinter} itself does not make reference to tangles or any specific implementations of them. As a result~\cref{thm:splinter} can be used in many different settings, implying variations of~\cref{thm:GMX} in a multitude of contexts. For example, after deriving in \cref{sec:applications_splinter} \cref{thm:GMX}, \cref{thm:Daniel}, and \cref{thm:sequence} from \cref{thm:splinter}, we use \cref{thm:splinter} to establish a new tree-of-tangles theorem in the setting of clique separations.

Since~\cref{thm:splinter} does not yield a canonical set of separations we cannot deduce~\cref{thm:GMX-Canonical} from it. We fix this in~\cref{sec:canonical} by establishing a version of~\cref{thm:splinter} which does give a canonical nested set, albeit under slightly stronger assumptions:

\begin{restatable}[Canonical Splinter Lemma]{LEM}{splinterThmcanonical}\label{thm:splinter_hierarchically}
    Let $ U $ be a universe of separations and let $ {\A=(A_i\mid i\in I)} $ be a collection of subsets of $ U $ that splinters hierarchically with respect to a partial order $ \curlyle $ on $ I $. Then there exists a nested set $ N=N(\A) $ meeting every $ A_i $ in~$ \A $.
    
    Moreover, $ N(\A) $ is canonical: if $ \phi $ is an isomorphism of separation systems between $ \bigcup_{i\in I}\vA_i $ and a subset of some universe $ U' $ such that the family $ \phi(\A)\coloneqq(\phi(A_i)\mid i\in I) $ splinters hierarchically with respect to $ \curlyle $, then $ N(\phi(\A))=\phi(N(\A)) $.
\end{restatable}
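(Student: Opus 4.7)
The plan is to mirror the proof of the non-canonical Splinter Lemma~(\cref{thm:splinter}), but to replace every arbitrary choice with one that is forced by the partial order~$\curlyle$. Concretely, I would construct $N(\A)$ recursively along $\curlyle$: for each $\curlyle$-downset $J\subseteq I$ I build a canonical nested set $N_J\subseteq U$ that meets $A_j$ for every $j\in J$, and take $N(\A):=\bigcup_J N_J$ over all such downsets~$J$. Since each $N_J$ is defined only in terms of the restriction of~$\A$ to~$J$ and of the structural relations in~$U$, any isomorphism~$\phi$ as in the statement will commute with this recursion, yielding the desired equivariance $N(\phi(\A))=\phi(N(\A))$.

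For the inductive step, suppose $N_J$ has been constructed canonically and let $i\in I\sm J$ be $\curlyle$-minimal. The key step is to identify a non-empty subset $C_i\sub A_i$ whose elements are nested with every element of $N_J$ and pairwise nested with each other, and which is invariant under every isomorphism fixing $N_J$ and~$\A$. The natural candidate is
\[
    C_i\coloneqq\menge{\,s\in A_i : \{s\}\cup N_J \text{ is nested}\,};
\]
the bulk of the work will be to show, using hierarchical splintering, that $C_i$ is non-empty and already pairwise nested (or, failing that, that it admits a canonical further refinement which is). If several $\curlyle$-incomparable indices are minimal in $I\sm J$, they are processed simultaneously, and hierarchical splintering across distinct $A_i,A_j$ should ensure that the combined extension $N_J\cup\bigcup_i C_i$ remains nested.

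The main obstacle will be this canonical selection of $C_i$: the non-canonical Splinter Lemma simply picks one element of~$A_i$, but any such arbitrary choice would destroy isomorphism-equivariance. The strength of ``splinters hierarchically'' should lie precisely in providing enough structure for the above canonical selection to work — roughly, it should say that any two crossing candidates in~$A_i$ can be traded for nested separations at a strictly $\curlyle$-smaller index, so that two elements of $A_i$ already compatible with $N_J$ must in fact be nested with each other (otherwise the trade would have occurred before $i$ was processed, forcing one of them into $N_J$ already). Once this pairwise-nestedness of $C_i$ is established, both non-emptiness of $N(\A)\cap A_i$ and canonicity of the whole construction follow formally, the latter because every step depends only on the data $(\A,\curlyle)$ and on the nested/crossing relations in~$U$, all of which are preserved by~$\phi$.
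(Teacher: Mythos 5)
There is a genuine gap, and it sits exactly at the point you flag as ``the bulk of the work'': the canonical selection inside $A_i$. Your central claim --- that hierarchical splintering lets two crossing candidates in $A_i$ be ``traded for nested separations at a strictly $\curlyle$-smaller index'', so that all elements of $A_i$ compatible with $N_J$ are automatically pairwise nested --- is not what the definition provides and is false in general. Conditions \ref{property:hierarchically1_diff} and \ref{property:hierarchically2_equal} only yield corner separations lying in $A_i\cup A_j$ themselves (condition \ref{property:hierarchically2_equal} even applies with $i=j$), never at a strictly smaller index; in particular, for a $\curlyle$-minimal index there is nothing to trade down to. Already for $\abs{I}=1$ your $C_1$ would be all of $A_1$, and a typical $A_1$ that splinters hierarchically --- e.g.\ the set of all separations efficiently distinguishing one fixed pair of profiles --- contains crossing pairs. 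So $C_i$ is in general not nested, and taking all of it (or all of several $C_i$ for incomparable minimal $i$ simultaneously, which moreover need not be mutually nested) breaks the construction. The missing idea is the paper's canonical selection device: the \emph{extremal} elements of $\bigcup_{k\in K}A_k$, where $K$ is the antichain of $\curlyle$-minimal indices; \cref{lem:ex_nested} shows this set $E$ is nested, and it is an isomorphism invariant, which is what makes canonicity work.

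There is a second, related problem: even the non-emptiness step depends on adding \emph{only} $E$ rather than everything compatible. The paper shows that each remaining $A_j$ contains an element nested with $E$ by taking $a_j\in A_j$ crossing as few elements of $E$ as possible and deriving a contradiction from \ref{property:hierarchically1_diff}/\ref{property:hierarchically2_equal} together with \cref{lem:fish} --- and, crucially, with the \emph{extremality} of the crossed element $a_i\in E$: the corner separations from different sides of $a_i$ that land in $A_i$ contradict maximality of $\va_i$. If your $N_J$ contains non-extremal elements of earlier $A_k$'s, that contradiction is unavailable and later sets may have no $N_J$-compatible element at all, so the recursion can stall. To repair the proposal you would essentially have to reinvent the paper's scheme: at each stage adjoin exactly the extremal elements of the union over the current minimal antichain, discard the indices already met, restrict the remaining $A_j$ to separations nested with what has been chosen (this restricted family again splinters hierarchically by \cref{lem:fish}, once non-emptiness is shown as above), and recurse; canonicity then follows because every step is phrased in terms of extremality and nestedness, both preserved by $\phi$.
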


We make use of~\cref{thm:splinter_hierarchically} in~\cref{sec:applications_canonical} to obtain a new shortest proof of~\cref{thm:GMX-Canonical} and to extend~\cref{thm:GMX-Canonical} to two natural types of separations whose structural submodularity does not come from a submodular order function: clique separations, and circle separations.

All graph-theoretic terms and notation used but not explained here can be found in~\cite{Diestel5}.

\section{Separation systems and profiles}\label{sec:terminology}

Our terms and notation regarding separation systems follow those of~\cite{AbstractSepSys}, whereto we refer the reader for an in-depth introduction to abstract separation systems. In addition we use terms and definitions from~\cite{ProfilesNew}, which introduced the notion of profiles. For the convenience of the reader we shall offer a brief introduction to separation systems and profiles in this section. In addition to this we will work with a slightly more general notion of $ k $-profiles than that used in~\cite{ProfilesNew}; we shall indicate below where we deviate from the usual definitions.

A \emph{separation system} is a poset $\vS$ with an order-reversing involution ${}^*$. If an element of $ \vS $ is introduced as $ \vs $, we denote its inverse under the involution by $ \sv\coloneqq(\vs)^* $. The involution on $ \vS $ being order-reversing then means that $ \vr\le\vs $ if and only if $ \rv\ge\sv $ for all $ \vr,\vs\in\vS $. The elements of $ \vS $ are called (oriented) {\em separations}. In this paper all separation systems (and graphs) are assumed to be finite.

Given a separation $ \vs\in\vS $ we write $ s $ for the set $ \{\vs,\sv\} $, and call $ s $ the underlying {\em unoriented separation} of $ \vs $ (and of $ \sv $). Given an unoriented separation $ s $, we call $ \vs $ and $ \sv $ the two {\em orientations} of $ s $. If $ \vS $ is a separation system we write $ S $ for the set of all $ s $ with $ \vs\in\vS $. For clarity of writing we may informally use terms that are defined only for unoriented separations also when referring to their orientations, and vice-versa. Likewise, we use the term `separation' for both oriented and unoriented separations.

Two separations $ r $ and $ s $ are {\em nested} if they have orientations $ \vr $ and $ \vs $ such that $ \vr\le\vs $; otherwise $ r $ and $ s $ {\em cross}. A set of separations is nested if all its elements are pairwise nested.

A {\em universe of separations} is a separation system $ \vU $ in which the poset comes with join and meet operators $ \join $ and $ \meet $ which make it a lattice. Note that in a universe DeMorgan's law holds, i.e., $ (\vr\join\vs)^*=\rv\meet\sv $. Given two separations $ r $ and $ s $ their {\em corner separations} are the separations of the form $ \vr\join\vs $ (or their underlying unoriented separations), where $ \vr $ and $ \vs $ range over all possible orientations of $ r $ and $ s $.

A separation $\vr\in\vS$ is \emph{trivial} if $\vr < \vs$ and $\vr < \sv$ for some $\vs\in\vS$. An unoriented separation is trivial if it has a trivial orientation. If $ T\subseteq S $ is a set of non-trivial separations that are pairwise nested, then $ T $ is a {\em tree set}.

Separations $ \vs\in\vS $ with $ \vs\le\sv $ are called {\em small}. A set of oriented separations is {\em regular} if it contains no small separation, and a set of unoriented separations is {\em regular} if none of its elements has an orientation that is small. Note that if $ \vs $ is trivial, then $ \vs $ is small; the converse of this is not true in general.
\footnote{For a separation to be small, only its relation to its own inverse matters, while for triviality the other separations of the system are also relevant. As an example, if our separation system $\vS$ consists of just one separation $\vs$ and its inverse $\sv$ and we have $\vs \le \sv$, then $\vs$ is small but not trivial in $\vS$.}

For a universe $ \vU\!$, an {\em order function} on $ \vU $ is a function $ |\cdot|\colon\vU\to\mathbb{R}_{\ge 0} $ with $ |\vs|=|\sv| $ for all $ \vs\in\vU\!$. Given such an order function we let $ |s|\coloneqq|\vs| $. The universe $\vU$ is called \emph{submodular} if the order function is submodular, i.e., if
\[ |\vr| + |\vs| \geq |\vr\join\vs| + |\vr\meet\vs| \quad \forall \vr,\vs \in \vU. \]

A separation system $\vS \subseteq \vU$ inside a universe is called \emph{(structurally) submodular} if for any $\vr, \vs \in \vS$ at least one of $\vr\join\vs$ and $\vr\meet\vs$ also lies in $\vS\!$.

An \emph{orientation} of a set $ S $ of unoriented separations is a set consisting of precisely one orientation, $\vs$ or $\sv$, for every $s\in S$. An orientation $O$ is \emph{consistent} if there are no $\vr,\vs\in O$ with $\rv < \vs$. Two orientations $ O $ and $ O'$ (not necessarily of the same set~$ S $) are \emph{distinguished} by some separation $ s $ if they orient $s$ differently; if $ O $ and $ O' $ are distinguished by some $ s $ ,then they are {\em distinguishable}. In a context where an order function is given, $s$ distinguishes $O$ and $O'$ \emph{efficiently} if it is of minimal order among all separations that distinguish $O$ and $O'$.

Given a universe $ U $ and $ S\sub U $, a {\em profile of $ S $} is a consistent orientation $ P $ of $ S $ with the {\em profile property:}
\[ \forall \,\vr,\vs\in P \colon (\rv\meet\sv)\notin P \tag{P}\label{property:P} \]
If the universe of separations $ U $ has an order function, then for a fixed $ S\sub U $ we let $S_k\coloneqq\{s \in S \mid |s| < k\}$. A {\em $ k $-profile (in $ S $)}, then, is a profile of $ S_k $, and a {\em profile in $ S $} is a $ k $-profile for some integer~$ k $.

The above definitions of $ S_k $ and ($ k $-)profiles in $ S $ are a minor generalisation of the definitions used in \cite{ProfilesNew}. That paper exclusively studied profiles in $ U\! $, that is, always took $ S=U $ in these definitions. Indeed, if no $ S $ is specified we shall implicitly take $ S=U $ in these definitions; observe that in that case the separation systems $ S_k $ are submodular if the order function on $ U $ is submodular.

Traditionally, a separation of a graph $ G=(V,E) $ is a pair $ (A,B) $ of subsets of $ V $ with $ A\cup B=V $ such that there is no edge in $ G $ between $ A\sm B $ and $ B\sm A $. These separations of graphs are an instance of separation systems: the set $ \vS(G) $ of all separations of $ G $ consitutes a separation system when equipped with the involution $ (A,B)^*\coloneqq(B,A) $ and the partial order where $ (A,B)\le(C,D) $ if and only if $ A\sub C $ and $ B\supseteq D $. In fact, this partial order on $ \vS(G) $ is a lattice with $ (A,B)\join (C,D)=(A\cup C\,,\,B\cap D) $, and $ \vS(G) $ becomes a submodular universe by letting the order of a separation be $ \abs{(A,B)}\coloneqq\abs{A\cap B} $. The notion of profiles in separation systems then generalises and extends the notion of tangles in graphs: every $ k $-tangle in $ G $ is in fact a $ k $-profile in this universe $ \vS(G) $ of all separations of $ G $.

\section{The Splinter Lemma}\label{sec:splinter}

In this section we establish our first main result, \cref{thm:splinter}, from which we shall derive two previously known theorems as well as two new flavours of tree-of-tangles theorems in~\cref{sec:applications_splinter}. A cornerstone of the proofs of both \cref{thm:splinter} as well as of the two known results we shall derive from it is the following lemma:

\begin{LEM}[{\cite{ProfilesNew}*{Lemma~2.1}}]\label{lem:fish}
    Let $ U $ be a universe and $ r,s\in U $ two crossing separations. Every separation $ t $ that is nested with both $ r $ and $ s $ is also nested with all four corner separations of $ r $ and $ s $.
\end{LEM}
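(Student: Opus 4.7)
The plan is a short case analysis on how an orientation of $t$ compares with orientations of $r$ and $s$. First I fix an arbitrary orientation $\vt$ of $t$. Because $t$ is nested with both $r$ and $s$, I can choose orientations $\vr$ of $r$ and $\vs$ of $s$ that are each comparable with $\vt$; swapping $\vr\leftrightarrow\rv$ or $\vs\leftrightarrow\sv$ merely permutes the four corner separations as an unordered family of underlying unoriented separations, so this relabelling costs no generality.

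Next I split into subcases according to the directions of the two comparisons. The two `mixed' subcases---e.g.\ $\vt\le\vr$ together with $\vs\le\vt$---immediately yield $\vs\le\vr$ by transitivity, contradicting the hypothesis that $r$ and $s$ cross. Only the two `aligned' subcases remain: (A) $\vt\le\vr$ and $\vt\le\vs$, or (B) $\vr\le\vt$ and $\vs\le\vt$.

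In case (A), $\vt$ is below each of the three corners $\vr\join\vs$, $\vr\join\sv$, $\rv\join\vs$ (because it is already below $\vr$ or below $\vs$), and $\vt\le\vr\meet\vs$ (an orientation of the fourth corner $\rv\join\sv$) handles the last corner. Case (B) is dual: $\vr\join\vs\le\vt$ covers one corner directly, and for each of the three others the inverse `meet-orientation' ($\rv\meet\vs$, $\vr\meet\sv$, or $\vr\meet\vs$) is bounded above by $\vr$ or by $\vs$, and hence by $\vt$. So in either case $\vt$ is comparable with some orientation of every corner, giving the conclusion.

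The argument is genuinely routine; the only thing to be careful about is selecting the right orientation of each corner to pair with $\vt$. The crossing hypothesis is used only to discard the two mixed subcases, and the remainder is an immediate consequence of the lattice identities $\vr\le\vr\join x$ and $\vr\meet x\le\vr$ together with DeMorgan.
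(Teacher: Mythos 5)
Correct, and essentially the approach of the original proof: the paper itself quotes this lemma from~\cite{ProfilesNew}*{Lemma~2.1} without reproving it, and the argument there likewise picks orientations of $r$ and $s$ comparable with an orientation of $t$, uses the crossing hypothesis to exclude the two mixed cases, and then bounds that orientation of $t$ by a suitable join- or meet-orientation of each of the four corners, exactly as you do. One small streamlining: your case (B) is case (A) applied to the inverse orientation of $t$, so if you let yourself choose the orientation of $t$ (rather than fixing it arbitrarily) a single case suffices.
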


Typically, the proof of a tree-of-tangles theorem proceeds by starting with some set $ N $ of separations which distinguish some (or all) of the given tangles, and then repeatedly replacing elements $ r $ of $ N $ which cross some other element $ s $ of $ N $ with an appropriate corner separation of $ r $ and $ s $. \cref{lem:fish} is then used to show that each of these replacements makes $ N $ `more nested', and thus one eventually obtains a nested set $ N $ which distinguishes all the given tangles. (See for instance the proof of Theorem~4 of~\cite{AbstractTangles}.) Usually, in order to not reduce the set of tangles distinguished by $ N $, one has to take special care which corner separation of two crossing $ r $ and $ s $ in $ N $ one uses for replacement; this depends on the specific properties of the tangles at hand.

Our \cref{thm:splinter} seeks to eliminate this careful selection of corner separations for replacement: we will show that for a family $ (A_i)_{i\le n} $ of subsets of some universe~$ U $ we can find a nested set $ N $ meeting all the $ A_i $, provided that these sets $ A_i $ have one straightforward-to-check property. This lemma will imply many of the existing tree-of-tangles theorems by taking as sets $ A_i $ the sets of separations which distinguish the $ i $-th pair of tangles, and checking that the one assumption needed for \cref{thm:splinter} is met. Notably, \cref{thm:splinter} will make no reference at all to tangles or their specific properties. The proof of \cref{thm:splinter} will also utilise \cref{lem:fish}; however, the only assumption we need about the sets $ A_i $ is that for elements $ a_i $ and $ a_j $ of $ A_i $ and $ A_j $, respectively, one of their four corner separations lies in either $ A_i $ or $ A_j $. This condition will be easy to verify if one wants to deduce other tree-of-tangles theorems from \cref{thm:splinter}. In fact, the verification of this condition, which just asks for the existence of {\em some} corner separation of $ a_i $ and $ a_j $ in $ A_i\cup A_j $, will usually be much more straightforward than the hands-on arguments used in the original proofs of those tree-of-tangles theorems, which for their replacement arguments often need to prove the existence of a {\em specific }corner separation of $ a_i $ and $ a_j $. So let us define this condition formally.

Let $ U $ be a universe and $ \A=(A_i)_{i\le n} $ some family of non-empty subsets of~$ U $. We say that $ \A $ {\em splinters} if, for every crossing pair of $a_i \in A_i \sm A_j$ and $a_j \in A_j \sm A_i$, one of their four corner separations lies in $A_i \cup A_j$.


Observe that  a family $ (A_i)_{i\le n} $ of non-empty sets splinters if and only if for every pair $ a_i\in A_i $ and $ a_j\in A_j$ of separations, either some corner separation of $ a_i $ and~$ a_j $ lies in $ A_i\cup A_j $, or one of $ a_i $ and~$ a_j $ lies in $ A_i\cap A_j $.
This is, because if two separations $ a_i $ and $ a_j $ are nested, then these separations themselves are corner separations of the pair $ a_i $ and $ a_j $.

With this definition and \cref{lem:fish} we are already able to state and prove our first main result:

\splinterThm*
\begin{proof}
    We proceed by induction on $ n $. The assertion clearly holds for $ n=1 $. So suppose that $ n>1 $ and that the above assertion holds for all smaller values of~$ n $.
    
    Suppose first that we can find some $ a_i\in A_i $ such that $ a_i $ is nested with at least one element of $ A_j $ for each $ j\ne i $. Then the assertion holds: for $ j\ne i $ let~$ A_j' $ be the set of those elements of $ A_j $ that are nested with $ a_i $. Then $ (A_j'\mid j\ne i) $ is a family of non-empty sets which splinters by \cref{lem:fish}. Thus by the induction hypothesis we can pick a nested set $ \{a_j\in A_j'\mid j\ne i\} $, which together with $ a_i $ is the desired nested set.
    
    To conclude the proof it thus suffices to find an $ a_i $ as above. To this end, we apply the induction hypothesis to $ A_1,\dots, A_{n-1} $ to obtain a nested set consisting of some $ a_1,\dots, a_{n-1} $. Fix an arbitrary $ a_n\in A_n $. For all $ i<n $, if $ a_i $ itself or one of its corner separations with $ a_n $ lies in $ A_n $, this $ a_i $ is the desired separation for the above argument. Otherwise, for each $ i<n $, either $ a_n $ itself or one of its corner separations with $ a_i $ lies in~$ A_i $, in which case $ a_n $ is the desired separation for the above argument.
\end{proof}

We shall see in \cref{sec:applications_splinter} that this innocuous-looking lemma is actually strong enough to directly imply various existing tree-of-tangles theorems, including \cref{thm:GMX}.

\section{Applications of the Splinter Lemma}\label{sec:applications_splinter}

\subsection{A short proof of \texorpdfstring{\cref{thm:GMX}}{\autoref*{thm:GMX}}}\label{sec:appl:GMX}

As a first application of \cref{thm:splinter} let us give a short proof of \cref{thm:GMX}:

\GMXthm*

\pagebreak[1]
Let us first recall the relevant definitions of separations and tangles in graphs:

Let $ G=(V,E) $ be a graph. Then the set $ \vS=\vS(G) $ of all separations $ (A,B) $ of $ G $ is a separation system with involution $ (A,B)^*=(B,A) $ and the partial order in which $ (A,B)\le(C,D) $ if and only if $ A\sub C $ and $ B\supseteq D $. The {\em order} of a separation $ (A,B) $ of $ G $ is $ \abs{(A,B)}\coloneqq\abs{A\cap B} $. With this order function $ \vS $ becomes a submodular universe, where $ (A,B)\join(C,D)=(A\cup C\,,\,B\cap D) $. For $ (A,B)\in\vS $ we write $ \{A,B\} $ for the underlying unoriented separation. Furthermore we write $ \vS_k=\vS_k(G) $ for the set of all separations of $ G $ of order $ <k $.

A {$ k $-tangle in $ G $}, for an integer $ k $, is an orientation $ P $ of $ \vS_k $ with the {\em tangle property:}

\[ \forall\,\, (A_1,B_1),(A_2,B_2),(A_3,B_3)\in\tau \colon G[A_1]\cup G[A_2]\cup G[A_3]\ne G \tag{T}\label{property:T} \]

A $ k $-tangle of $ G $ is a {\em maximal} tangle of $ G $ if it is not the subset of some $ l $-tangle of $ G $ for some $ l>k $.

For a tree-decomposition $ (T,\mathcal{V}) $ of $ G $ and an edge $ xy $ of $ T $ let $ T_x $ and $ T_y $ denote the components of $ T-xy $ containing $ x $ and $ y $, respectively. Let $ U_x $ be the union of all bags $ V_z $ with $ z\in T_x $, and similarly let $ U_y $ be the union of all bags $ V_z $ with $ z\in T_y $. Then we call $ (U_x,U_y) $ the {\em separation induced by $ xy $}. The {\em set of separations induced by $ (T,\mathcal{V}) $} is the set of all separations induced by the edges of $ T $.

We say that a tree-decomposition $ (T,\mathcal{V}) $ of $ G $ {\em displays its maximal tangles} if the set of separations induced by $ (T,\mathcal{V}) $ efficiently distinguishes the set of all maximal tangles of $ G $.

If $ N $ is a nested set of separations of $ G $ it is straightforward to find a tree-decomposition of $ G $ whose set of induced separations is precisely $ N $ (see~\cites{GMX,TreeSets}). Therefore, in order to prove \cref{thm:GMX}, it suffices to find a nested set $ N $ of separations of $ G $ which efficiently distinguishes all maximal tangles of $ G $.

For every pair $ P,P' $ of distinct maximal tangles of $ G $ let
\[ A_{P,P'}\coloneqq\menge{\{A,B\}\in S(G)\mid \{A,B\}\tn{ efficiently distinguishes }P\tn{ and }P'}. \]
Since $ P $ and $ P' $ are not subsets of each other $ A_{P,P'} $ is a non-empty set.

Let $ \A $ be the family of all these sets $ A_{P,P'} $. A nested set of separations of $ G $ distinguishes all maximal tangles of $ G $ efficiently if and only if it contains an element of each $ A_{P,P'} $. Therefore the existence of such a set, and hence \cref{thm:GMX}, now follows directly from \cref{thm:splinter} once we show that $ \A $ splinters:

\begin{LEM}\label{lem:GMXsplinters}
    The family $ \A $ of all $ A_{P,P'} $ splinters.
\end{LEM}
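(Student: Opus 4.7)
I verify the splinter condition directly. Take a crossing pair $r\in A_{P_1,P_1'}\sm A_{P_2,P_2'}$ and $s\in A_{P_2,P_2'}\sm A_{P_1,P_1'}$; I must exhibit a corner of $r$ and $s$ in $A_{P_1,P_1'}\cup A_{P_2,P_2'}$. Orient $\vr\in P_1$, $\rv\in P_1'$, $\vs\in P_2$, $\sv\in P_2'$, and write $k_i$ for the common (efficient) order of all separations in $A_{P_i,P_i'}$. By exchanging the two pairs if necessary, I assume $k_1\le k_2$. Since every distinguisher of $P_2,P_2'$ has order at least $k_2$, and $|r|=k_1\le k_2$, the separation $r$ cannot distinguish $P_2$ from $P_2'$ (else it would do so efficiently, contradicting $r\notin A_{P_2,P_2'}$). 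After relabelling, I assume $\vr\in P_2\cap P_2'$.

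\textbf{Main step.} The unoriented corner $c$ underlying $\vr\join\vs$ always distinguishes $P_2$ from $P_2'$. Indeed, the profile property applied to $\vr,\vs\in P_2$ gives $\rv\meet\sv\notin P_2$, hence $\vr\join\vs\in P_2$; and consistency of $P_2'$, using $\sv\in P_2'$ and the strict inequality $\vs<\vr\join\vs$ forced by $r,s$ crossing, rules out $\vr\join\vs\in P_2'$, so $\rv\meet\sv\in P_2'$. Thus if $|\vr\join\vs|\le k_2$ then $c$ is an efficient distinguisher of $P_2,P_2'$ and $c\in A_{P_2,P_2'}$. By the symmetric argument exchanging the role of $\vs$ with $\sv$, the corner underlying $\vr\join\sv$ also distinguishes $P_2,P_2'$, and lies in $A_{P_2,P_2'}$ whenever $|\vr\join\sv|\le k_2$.

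\textbf{Ruling out the remaining case.} If neither of the two main-step corners works, then $|\vr\join\vs|>k_2$ and $|\vr\join\sv|>k_2$. Submodularity of the graph-separation order function applied to $(\vr,\vs)$ and $(\vr,\sv)$ then gives $|\vr\meet\vs|<k_1$ and $|\vr\meet\sv|<k_1$. I then case-split on how $s$ is oriented by $P_1$ and $P_1'$ (four sub-cases according to whether $\vs$ or $\sv$ lies in each of $P_1,P_1'$). In each sub-case, the profile property combined with consistency (again exploiting the strict inequalities produced by $r,s$ crossing) pins down the orientations of the ``small'' corners $\vr\meet\vs$ and $\vr\meet\sv$ by $P_1$ and $P_1'$, and forces one of them to distinguish $P_1$ from $P_1'$. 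That distinguishing corner then has order strictly less than $k_1$, contradicting the efficiency of $r$ as distinguisher of $(P_1,P_1')$.

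\textbf{Main obstacle.} The main obstacle is the four-way sub-case analysis ruling out the remaining case. The pattern is uniform---profile property forces some orientations, consistency forbids others, and the strict inequalities arising from crossing make the consistency violations bite---but carrying out each sub-case requires careful bookkeeping of which orientation of which corner lies in which profile. The key engine throughout is the interplay of (a)~submodularity (bounding corner orders), (b)~the profile property (forcing orientations), and (c)~consistency (ruling out incompatible orientations).
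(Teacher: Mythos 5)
Your setup and your ``main step'' are essentially the paper's first half (with one small slip: the corner underlying $\vr\join\vs$ only ``distinguishes $P_2,P_2'$'' once you know they orient it, i.e.\ after imposing $\abs{\vr\join\vs}\le k_2$, not before). The genuine gap is in ``Ruling out the remaining case''. Your four-way case split is on how $P_1$ and $P_1'$ orient $s$ --- but they need not orient $s$ at all. The tangles in this lemma are \emph{maximal} tangles of varying orders, and when $k_1<k_2$ the pair $P_1,P_1'$ may well have order at most $k_2$, so that $s\notin S_{k_1+1},\dots$ is simply not oriented by them. This is exactly the situation that makes \cref{thm:GMX} stronger than the fixed-order \cref{thm:Daniel}, so it cannot be waved away. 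In the sub-cases you do cover your engine works (e.g.\ if $\rv,\vs\in P_1'$ then (\ref{property:P}) forces $\rv\join\vs\in P_1'$ while consistency with $\vr\in P_1$ forces $\vr\meet\sv\in P_1$, a distinguisher of order $<k_1$, contradicting efficiency of $r$); but in the uncovered case all that profile property plus consistency yield is that $P_1$ contains $\vr$ and both low-order corners $\vr\meet\vs,\vr\meet\sv$, whence by efficiency of $r$ also $P_1'\supseteq\{\rv,\,\vr\meet\vs,\,\vr\meet\sv\}$ --- and this configuration is \emph{not} excluded by (\ref{property:P}) and consistency alone. It is precisely the configuration that the robustness axiom of~\cite{ProfilesNew} was introduced to forbid, which is why the paper's \cref{lem:eff_splinter} needs robustness as an extra hypothesis at exactly this point.

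The paper's proof of the present lemma escapes this by using the tangle property~(\ref{property:T}), which you never invoke (you treat the tangles purely as profiles): writing $\vr=(A,B)$, $\vs=(C,D)$ suitably, one has $G[B]\cup G[A\cap C]\cup G[A\cap D]=G$, so a tangle containing $(B,A)$ cannot contain both corners $(A\cap C,B\cup D)$ and $(A\cap D,B\cup C)$; hence $P_1'$ must contain the inverse of one of them, which then distinguishes $P_1,P_1'$ at order at most $k_1$, contradicting nothing but rather landing in $A_{P_1,P_1'}$ (or, in your framing, contradicting the assumption that no corner lies in the union). To repair your proof you must either invoke~(\ref{property:T}) in this way, or cite the robustness of graph tangles (itself proved via~(\ref{property:T})); consistency and the profile property alone do not suffice.
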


\begin{proof}
    Let $P\neq P'$ and $Q\neq Q'$ be two pairs of distinct maximal tangles of $ G $ and let $\{A,B\}\in A_{P,P'}$ and $\{C,D\}\in A_{Q,Q'}$ be two crossing separations. We need to show that we have either $ \{A,B\}\in A_{Q,Q'} $ or $ \{C,D\}\in A_{P,P'} $, or that some corner separation of $ \{A,B\} $ and $ \{C,D\} $ lies in~$ {A_{P,P'}\cup A_{Q,Q'}} $.
    By switching their roles if necessary we may assume that $ \abs{(A,B)}\le\abs{(C,D)} $. 
    
    Since $ Q $ and $ Q' $ both orient $ (C,D) $, and $ \abs{(A,B)}\le\abs{(C,D)} $, both tangles also orient $ \{A,B\} $. If $ Q $ and $ Q' $ orient $ \{A,B\} $ differently, then $ \{A,B\} $ distinguishes them efficiently and hence lies in $ A_{Q,Q'} $. So suppose that $ Q $ and $ Q' $ contain the same orientation of $ \{A,B\} $, say, $ (A,B) $.
    
    By renaming them if necessary we may assume that $ (C,D)\in Q $ and $ (D,C)\in Q' $.
    
    Consider the corner separation $ (A\cup C\,,\,B\cap D) $ and suppose first that $ \abs{(A\cup C\,,\,B\cap D)}\le\abs{(C,D)} $. Then, by $ (A,B),(C,D)\in Q $ and the tangle property~(\ref{property:T}), $ Q $ must contain $ (A\cup C\,,\,B\cap D) $. On the other hand $ Q' $ must contain its inverse $ (B\cap D\,,\,A\cup C) $ since $ (D,C)\in Q' $. But then this corner separation efficiently distinguishes $ Q $ and $ Q' $ and hence lies in $ A_{Q,Q'} $.
    
    Thus we may suppose that $ \abs{(A\cup C\,,\,B\cap D)}\ge\abs{(C,D)} $. By a similar argument we may further suppose that $ \abs{(A\cup D\,,\,B\cap C)}\ge\abs{(C,D)} $. Submodularity then yields $ \abs{(A\cap C\,,\,B\cup D)},\abs{(A\cap D\,,\,B\cup C)}\le\abs{(A,B)} $.
    
    By switching the roles of $ P $ and $ P' $ if necessary we may assume that $ (A,B)\in P $ and $ (B,A)\in P' $. Then by the above inequality $ P $ must contain both $ (A\cap C\,,\,B\cup D) $ and $ (A\cap D\,,\,B\cup C) $, since it cannot contain either of their inverses due to  $ (A,B)\in P $ and the tangle property~(\ref{property:T}). However, due to $ (B,A)\in P' $ and the tangle property~(\ref{property:T}), $ P' $ cannot contain both of $ (A\cap C\,,\,B\cup D) $ and $ (A\cap D\,,\,B\cup C) $. In must therefore contain the inverse of at least one of these corner separations, which then efficiently distinguishes $ P $ and $ P' $ and hence lies in $ A_{P,P'} $.
\end{proof}

\subsection{Abstract tangles in structurally submodular separation systems}\label{sec:appl:Daniel}

The most general, or most widely applicable, tree-of-tangles theorem published so far, in the sense of having the weakest premise, is the following:

\DanielThm*

The price to pay in \cref{thm:Daniel} for having the mildest set of requirements is that its assertion is also among the weakest of all tree-of-tangles theorems. For graphs, \cref{thm:Daniel} implies only that \emph{for any fixed $ k $} every graph has a tree decomposition displaying its $ k $-tangles. This is a much weaker statement than \cref{thm:GMX}, which finds a tree-decomposition displaying the maximal $ k $-tangles of that graph for all values of $ k $ simultaneously.

Let us show how to derive \cref{thm:Daniel} from \cref{thm:splinter}. For this, let $\P$ be a set of profiles of a submodular separation system $S$, and for distinct $P$ and $P'$ in $\P$ let \[
    A_{P,P'}:=\{ s\in S \mid s \text{ distinguishes }P\tn{ and }P'\}.
\]
For proving \cref{thm:Daniel} it suffices to show that the family $A_\P = (A_{P,P'}\mid P\ne P'\in \P)$ splinters:

\begin{LEM}\label{lem:Danielsplinter}
Given a set $\P$ of profiles of a submodular separation system $\vS$, the family $\A_\P = (A_{P,P'}\mid P\neq P'\in \P)$ splinters.
\end{LEM}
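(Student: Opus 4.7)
The plan is to verify the splinter condition directly. Take crossing $r \in A_{P,P'}$ and $s \in A_{Q,Q'}$ with $r \notin A_{Q,Q'}$ and $s \notin A_{P,P'}$; the goal is to exhibit a corner of $r$ and $s$ that lies in $A_{P,P'} \cup A_{Q,Q'}$. The hypotheses force $Q$ and $Q'$ to agree on $r$, and $P$ and $P'$ to agree on $s$. I would fix orientations so that $\vr \in P$, $\rv \in P'$, $\vs \in Q$, $\sv \in Q'$, and write $t \in \{\vs, \sv\}$ for the common $s$-orientation of $P$ and $P'$, and $u \in \{\vr, \rv\}$ for the common $r$-orientation of $Q$ and $Q'$.

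The heart of the proof will be a forced-orientation observation: whenever a profile $T \in \{P, P', Q, Q'\}$ contains some $x \in \{\vr, \rv\}$ and $y \in \{\vs, \sv\}$, then $T$'s orientation of every corner of $r$ and $s$ that lies in $\vS$ is completely determined. For the corner labelled by $(x, y)$, the profile property~(\ref{property:P}) rules out $x^* \meet y^* \in T$ and so forces $T \ni x \join y$. For any corner labelled by $(x', y') \ne (x, y)$, consistency forbids $x' \join y' \in T$: because $r$ and $s$ cross, $(x')^* \meet (y')^*$ is strictly below $x$ or below $y$, which would conflict with $x, y \in T$; so $T$ must contain the opposite orientation $(x')^* \meet (y')^*$ instead. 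Consequently, a corner of $r$ and $s$ distinguishes two profiles precisely when its label matches the orientation-pair of exactly one of them.

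Applying this, the corners distinguishing $(P, P')$ are the two labelled $(\vr, t)$ and $(\rv, t)$, and the corners distinguishing $(Q, Q')$ are the two labelled $(u, \vs)$ and $(u, \sv)$. These four labels overlap only at $(u, t)$, so together they cover three of the four possible labellings of corners; only the label $(u^*, t^*)$ is missing from the union. Structural submodularity of $\vS$, applied to the pairs $(\vr, \vs)$ and $(\vr, \sv)$, places at least one corner into $\vS$ from each of the two diagonals $\{(\vr, \vs), (\rv, \sv)\}$ and $\{(\vr, \sv), (\rv, \vs)\}$. The missing label $(u^*, t^*)$ shares its diagonal with the overlap label $(u, t)$: if submodularity on that diagonal delivers the overlap corner, it is in the union and we are done; otherwise it delivers the missing corner, in which case submodularity on the other diagonal yields a corner in $\vS$ that must lie in the union, since both labels of that diagonal do.

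The main obstacle I anticipate is establishing the forced-orientation observation cleanly---in particular, spelling out how the crossing of $r$ and $s$ supplies the strict inequality needed in the consistency step. Once that observation is in place, the combinatorial argument with three-out-of-four covered labels and two submodularity diagonals finishes the proof almost mechanically.
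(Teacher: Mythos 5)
Your proposal is correct and is essentially the paper's own argument: after the same reduction to the case where $Q,Q'$ agree on $r$ and $P,P'$ agree on $s$, the decisive step in both proofs is that structural submodularity puts into $\vS$ a corner from the diagonal $\{\rv\join\vs,\,\vr\join\sv\}$ (in the paper's orientation convention), whose orientation is then forced by consistency in one profile and by the profile property in the other, so that it distinguishes one of the two pairs. Your ``forced-orientation'' bookkeeping over all four corner labels and the second application of submodularity are harmless extras; the paper gets by with a single application to the one relevant diagonal.
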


\begin{proof}
Let $P\neq P'$ and $Q\neq Q'$ be two pairs of profiles in $\P$ and let $r\in A_{P,P'}$ and $s\in A_{Q,Q'}$ be two distinct separations. We need to show that we have either $ r\in A_{Q,Q'} $ or $ s\in A_{P,P'} $, or that some corner separation of $ r $ and $ s $ lies in~$ {A_{P,P'}\cup A_{Q,Q'}} $. If $ r $ and $ s $ are nested, then they themselves are corner separations of $ r $ and $ s $ and there is nothing to show, so let us suppose that $ r $ and $ s $ cross.

Both $ r $ and $ s $ are oriented by all four profiles $ P,P',Q $, and $ Q' $. If $ r $ distinguishes $ Q $ and $ Q' $, or if $ s $ distinguishes $ P $ and $ P' $, we are done; so suppose that there are orientations $ \vr $ and $ \vs $ of $ r $ and $ s $ with $ \vr\in Q\cap Q' $ and $ \vs\in P\cap P' $. By possibly switching the roles of $ P $ and $ P' $, or of $ Q $ and $ Q' $, we may further assume that $ \rv\in P $ and $ \vr\in P' $ as well as $ \sv\in Q $ and $ \vs\in Q' $.

The submodularity of $ S $ implies that at least one of the two corner separations $ \rv\join\vs $ and $ \vr\join\sv $ lies in $ \vS $. We will only treat the case that $ (\rv\join\vs)\in\vS $; the other case is symmetrical.

From the assumption that $ r $ and $ s $ cross it follows that $ \rv\join\vs $ is distinct from $ r $ and $ s $ as an unoriented separation. Therefore, by $ \vr\in P' $ and consistency, $ P' $ cannot contain $ \rv\join\vs $ and hence has to contain its inverse $ \vr\meet\sv $. On the other hand, by $ \rv,\vs\in P $ and the profile property~(\ref{property:P}), $ P $ cannot contain the inverse of $ \rv\join\vs $ and thus must contain $ \rv\join\vs $. Now $ \rv\join\vs $ distinguishes $ P $ and $ P' $ and is therefore the desired corner separation in $ A_{P,P'} $.
\end{proof}

Let us now deduce \cref{thm:Daniel} from \cref{thm:splinter}.

\begin{proof}[Proof of \cref{thm:Daniel}]
 Let $\P$ be a set of profiles of $S$. By \cref{lem:Danielsplinter} the collection $(A_{P,P'}\mid P\ne P'\in\mathcal{P})$ of subsets of~$S$ splinters. Each of the $A_{P,P'}$ is non-empty as $P$ and $P'$ are distinct profiles of $S$. Thus, by \cref{thm:splinter}, we can pick one element from each $A_{P,P'}$ so that the set $N$ of all these elements is a nested set of separations. It is then clear that $ N $ distinguishes all the profiles in $ \P $.
\end{proof}

The above way of using \cref{thm:splinter} to prove a tree-of-tangles theorem is archetypical, and we will use the strategy from this section as a blueprint for the applications of \cref{thm:splinter} in the following sections.

\subsection{Profiles of separations}\label{sec:appl:profiles_noncanonical}

\cref{thm:Daniel} from the previous section implied that every graph has, for any fixed integer~$ k $, a tree-decomposition which displays its $ k $-tangles. However, Robertson's and Seymour's~\cref{thm:GMX} shows that every graph has a tree-decomposition which displays all its {\em maximal} tangles, i.e., which distinguishes all its distinguishable tangles for all values of $ k $ simultaneously, not just for some fixed value of~$ k $. Therefore \cref{thm:Daniel} does not imply \cref{thm:GMX}.

Moreover, since \cref{thm:Daniel} does not assume that the universe $ U $ it is applied to comes with an order function, \cref{thm:Daniel} cannot say anything about the order of the separations used in the nested set to distinguish all the profiles. If the universe $ U $, as for instance in a graph, {\em does} come with a submodular order function, one might ask for a nested set which not only distinguishes all the profiles given, but one which does so {\em efficiently}, i.e., which contains for every pair $ P,P' $ of profiles a separation of minimal order among all the separations in $ U $ which distinguish $ P $ and $ P' $.

The following theorem satisfies both of the requirements above, and is the strongest tree-of-tangles theorem known so far:

\begin{restatable}[Canonical tangle-tree theorem for separation universes {\cite{ProfilesNew}*{Theorem~3.6}}]{THM}{profilesCanonToT}\label{thm:Profiles_canon}
Let $U=(\vU,\le,{}^*,\join,\meet,|\cdot|)$ be a submodular universe of separations. Then for
every robust set $\P$ of profiles in $U$ there is a nested set $T = T(\P)\subseteq U$ of
separations such that:
\begin{enumerate}[label=(\roman*)]
\item every two profiles in $\P$ are efficiently distinguished by some separation in $T$;
\item every separation in $T$ efficiently distinguishes a pair of profiles in $\P$;
\item for every automorphism $\alpha$ of $\vU$ we have $T(\P^\alpha) = T(\P)^\alpha$; \hfill \emph{(canonicity)}
\item if all the profiles in $\P$ are regular, then $T$ is a regular tree set.
\end{enumerate}
\end{restatable}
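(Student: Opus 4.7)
The plan is to follow the blueprint from \cref{sec:appl:Daniel} (define candidate sets of distinguishers, verify a splinter condition, invoke a splinter lemma), upgraded in two respects: we work with \emph{efficient} distinguishers and demand a \emph{canonical} nested set. The former is made possible by submodularity of the order function on $U$; the latter calls for the Canonical Splinter Lemma (\cref{thm:splinter_hierarchically}) in place of the non-canonical one used in the Daniel proof.

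First, for every unordered pair $\{P,P'\}$ of distinct profiles in $\P$ set
\[
    A_{P,P'} := \{\, s \in U \mid s \text{ efficiently distinguishes } P \text{ and } P'\,\},
\]
which is non-empty because there exists a distinguisher and hence one of minimum order. Index the family by $I := \{\{P,P'\} \mid P \ne P' \in \P\}$, write $k(i)$ for the common order of elements of $A_i$, and equip $I$ with the partial order $\curlyle$ defined by $i \curlyle j$ iff $k(i) \le k(j)$. Since order functions are preserved by automorphisms of $\vU$, any induced action of $\operatorname{Aut}(\vU)$ on $I$ respects $\curlyle$, and such automorphisms carry $A_i$ to $A_{\alpha(i)}$.

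The main task is to verify that $\A = (A_i \mid i \in I)$ splinters hierarchically with respect to $\curlyle$. Given crossing $r \in A_{P,P'}$ and $s \in A_{Q,Q'}$ with, WLOG, $|r| \le |s|$, the argument mirrors that of \cref{lem:Danielsplinter}: orient them so that $\rv \in P$, $\vr \in P'$ and $\sv \in Q$, $\vs \in Q'$; submodularity yields
\[
    |\rv \join \vs| + |\vr \meet \sv| \le |r| + |s|, \qquad |\vr \join \sv| + |\rv \meet \vs| \le |r| + |s|.
\]
Combined with consistency, the profile property~(\ref{property:P}), and the robustness of $\P$, these inequalities force some corner separation to distinguish one of the pairs $\{P,P'\}$ or $\{Q,Q'\}$. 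The decisive hierarchical point is that efficiency of $r$ and $s$ plus the displayed inequalities force this corner to be \emph{itself efficient} for the pair it distinguishes, and therefore to lie in $A_{P,P'}$ or $A_{Q,Q'}$, with its containing index $\curlyle$ the $\curlyle$-maximum of $\{P,P'\}$ and $\{Q,Q'\}$.

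With hierarchical splintering established, \cref{thm:splinter_hierarchically} delivers a nested set $T(\P) := N(\A)$ meeting every $A_{P,P'}$; this immediately yields~(i), and~(iii) follows from the canonicity clause, since any automorphism $\alpha$ of $\vU$ permutes $\P$, induces a permutation of $I$ preserving $\curlyle$, and maps $A_i$ to $A_{\alpha(i)}$. Property~(ii) follows on passing, if necessary, to $T \cap \bigcup_i A_i$, which is still nested and still canonical. For~(iv), one notes that a small separation cannot efficiently distinguish two regular profiles, so no element of any $A_i$ and hence no element of $T$ is small, making $T$ a regular tree set. The central obstacle throughout is the efficiency upgrade in the splintering step: it is the robustness of $\P$ that rules out the pathological configurations in which submodularity alone would fail to pin the corner down as efficient.
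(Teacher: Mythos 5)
Your overall route is the same as the paper's (candidate sets $A_{P,P'}$ of efficient distinguishers, an order on the index set by the order of the separations involved, then the Canonical Splinter Lemma), but the proposal has a genuine gap at its core: you never actually verify the definition of \emph{splintering hierarchically}, and your sketch only argues the weaker, non-canonical splinter condition. Conditions \ref{property:hierarchically1_diff} and \ref{property:hierarchically2_equal} do not merely ask for \emph{some} corner separation in $A_{P,P'}\cup A_{Q,Q'}$: for comparable indices you must either place a corner in the $\curlyle$-larger set or exhibit \emph{two} corners from different sides of $a_i$ in the smaller one, and for incomparable indices you must exhibit two corner separations \emph{from different sides} of one of the two separations, one of them in that separation's own set. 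Your sketch ("these inequalities force some corner separation to distinguish one of the pairs\,\dots\,with its containing index the $\curlyle$-maximum") addresses at best the case $\abs{r}<\abs{s}$ with $r$ not distinguishing $Q,Q'$ (where robustness indeed enters), but it says nothing about the equal-order case $\abs{r}=\abs{s}$, which is where condition \ref{property:hierarchically2_equal} must be checked and which occupies most of the paper's corresponding lemma: there one distinguishes whether $Q,Q'$ orient $r$ differently or not, and uses efficiency plus submodularity to show that \emph{both} of $\vr\join\vs$ and $\vr\meet\vs$ (respectively $\vr\join\sv$ and $\rv\join\vs$) have order exactly $\abs{s}$ and hence land in the appropriate sets, giving the required two corners from different sides. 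The "different sides" requirement is exactly what makes the extremal-element argument behind \cref{thm:splinter_hierarchically} (via \cref{lem:ex_nested}) work; with only one corner in $A_i\cup A_j$ you are back at the hypothesis of \cref{thm:splinter}, which the paper explicitly shows cannot yield a canonical choice.

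Two smaller points. Your relation $i\curlyle j\iff k(i)\le k(j)$ is not a partial order (distinct pairs of equal order are comparable both ways, so antisymmetry fails), and \cref{thm:splinter_hierarchically} is stated, and its proof run, for a genuine partial order — take the paper's definition, $\{P,P'\}\prec\{Q,Q'\}$ iff the order of the elements of $A_{P,P'}$ is \emph{strictly} smaller, so that equal-order pairs are incomparable and fall under condition \ref{property:hierarchically2_equal}. Also, $A_{P,P'}$ is non-empty only for \emph{distinguishable} pairs (a $k$-profile contained in an $l$-profile admits no distinguisher), so the index set should be the set of distinguishable pairs, as in the paper; and the pruning step $T\cap\bigcup_i A_i$ for (ii) is unnecessary, since the nested set produced by \cref{thm:splinter_hierarchically} consists of (extremal) elements of the $A_i$ anyway. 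Your arguments for (iii) and (iv) are fine.
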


Since the definition of robustness of (a set of) profiles is rather involved we do not repeat it here. In the following proofs robustness will be used only in one place; therefore we shall use it there as a black box and refer the reader to~\cite{ProfilesNew} for the full definition.

Since every $ k $-tangle of a graph is robust~(\cite{ProfilesNew}), \cref{thm:Profiles_canon} indeed implies \cref{thm:GMX} of Robertson and Seymour that every graph has a tree-decomposition displaying its maximal tangles (see \cite{ProfilesNew}*{Section 4.1} for more on building tree-decompositions from nested sets of separations, and how \cref{thm:Profiles_canon} implies \cref{thm:GMX}). Moreover, \cref{thm:Profiles_canon} improves upon \cref{thm:GMX} by finding a {\em canonical} such tree-decomposition, i.e., one which is preserved by automorphisms of the graph. Since \cref{thm:splinter} does not guarantee any kind of canonicity, we are not able to deduce the full \cref{thm:Profiles_canon} from \cref{thm:splinter}; however, using \cref{thm:splinter} we will be able to find a nested set $ T\sub U $ with the properties (i), (ii) and (iv). We shall refer to this as the \emph{non-canonical \cref{thm:Profiles_canon}}. (In \cref{sec:canonical} we shall prove a version of \cref{thm:splinter} which implies \cref{thm:Profiles_canon} in full.)

Our strategy will largely be the same as in \cref{sec:appl:Daniel}. For a robust set $ \P $ of profiles in a submodular universe $ U $ we define for every pair $ P,P' $ of distinct profiles in $ \P $ the set
\[ A_{P,P'}:=\menge{a\in U\mid a\tn{ distinguishes }P\tn{ and }P'\tn{ efficiently}}. \]
Let $ \A_\P $ be the family $ (A_{P,P'} \mid P\neq P' \in \P) $. The only lemma we need in order to apply \cref{thm:splinter} is the following:

\begin{LEM}\label{lem:eff_splinter}
    For a robust set $\P$ of profiles in $U$ the family $\A_\P$ of the sets $ A_{P,P'} $ splinters.
\end{LEM}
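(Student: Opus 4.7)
The plan is to mirror the proof of \cref{lem:Danielsplinter}, tracking orders carefully throughout and invoking robustness as a black box in the one step that structural submodularity, consistency, and the profile property do not close on their own. Let $r\in A_{P,P'}$ and $s\in A_{Q,Q'}$ be crossing; by symmetry I may assume $\abs{r}\le\abs{s}$.

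Both $Q$ and $Q'$ orient $r$, and if they orient it differently then $r$ distinguishes $Q,Q'$; efficiency of $s$ then forces $\abs{r}=\abs{s}$, so $r\in A_{Q,Q'}$ and the splinter condition holds. Otherwise I fix $\vr\in Q\cap Q'$ and, after relabelling, $\vs\in Q$ and $\sv\in Q'$. I then examine the two corners $\vr\join\vs$ and $\vr\join\sv$: consistency together with~(\ref{property:P}) pins their orientations in $Q$ and $Q'$, so each corner distinguishes $Q$ from $Q'$ whenever both profiles orient it. Combined with the efficiency of $s$, this places such a corner in $A_{Q,Q'}$ as soon as its order is at most $\abs{s}$.

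The remaining situation is therefore $\abs{\vr\join\vs}>\abs{s}$ and $\abs{\vr\join\sv}>\abs{s}$. Submodularity of the universe $U$ then gives $\abs{\vr\meet\vs},\abs{\vr\meet\sv}<\abs{r}$. Relabelling so that $\vr\in P$ and $\rv\in P'$, the efficiency of $r$ forces $P$ and $P'$ to agree on every separation of order less than $\abs{r}$, and consistency of $P$ with $\vr$ rules out the inverses; hence $\vr\meet\vs,\vr\meet\sv\in P\cap P'$.

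The main obstacle is what comes next. The low-order corners of $r$ and $s$ on the $\vr$-side are both trapped in $P\cap P'$ and are therefore not distinguishers, while the two high-order corners have order strictly exceeding $\abs{r}$. On order-theoretic grounds alone, no corner of $r$ and $s$ is an \emph{efficient} distinguisher of $P,P'$. This is precisely the configuration that the robustness hypothesis on $\P$ is designed to resolve, and I plan to invoke it as a black box here, as announced just before \cref{thm:Profiles_canon}, to rule this configuration out---equivalently, to extract from it a corner of $r$ and $s$ that orients $P$ and $P'$ oppositely with order at most $\abs{r}$, thereby landing in $A_{P,P'}$ and completing the proof.
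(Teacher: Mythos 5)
Your proposal is correct and follows essentially the same route as the paper's proof: case analysis on whether $Q,Q'$ orient $r$ differently, then on whether one of the join-corners $\vr\join\vs$, $\vr\join\sv$ has order at most $\abs{s}$ (landing it in $A_{Q,Q'}$ via consistency and~(\ref{property:P})), and otherwise submodularity pushes both meet-corners below $\abs{r}$, whereupon consistency and the efficiency of $r$ trap them in both $P$ and $P'$, a configuration that robustness (used as a black box) forbids. Only the parenthetical ``equivalently, extract a corner in $A_{P,P'}$'' is off, since those corners have order strictly below $\abs{r}$ and so could never lie in $A_{P,P'}$; the correct (and paper's) conclusion is simply the contradiction, showing this final case cannot occur.
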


\begin{proof}
    Let $P,P'$ and $Q,Q'$ be two pairs of distinguishable profiles in $\P$ and let $r\in A_{P,P'}$ and $s\in A_{Q,Q'}$ be two crossing separations. We need to show that we have either $ r\in A_{Q,Q'} $ or $ s\in A_{P,P'} $, or that some corner separation of $ r $ and $ s $ lies in~$ {A_{P,P'}\cup A_{Q,Q'}} $. By switching their roles if necessary we may assume that $ \abs{r}\le\abs{s} $.
    
    Since $ Q $ orients all separations in $ U $ of order at most the order of $ s $, $ Q $ contains some orientation $ \vr $ of $ r $. Similarly $ Q' $ contains some orientation of $ r $: if $ \rv\in Q' $ then $ r $ distinguishes $ Q $ and $ Q' $, and by $ \abs{r}\le\abs{s} $ it does so efficiently, giving $ r\in A_{Q,Q'} $. So suppose that $ \vr\in Q' $.
    
    If either one of the two corner separations $ \vr\join\vs $ and $ \vr\join\sv $ has order at most the order of $ s $, then that corner separation would distinguish $ Q $ and $ Q' $ by the profile property. In particular, that corner separation would do so efficiently and hence lie in $ A_{Q,Q'} $. Thus we may assume that both of these corner separations have order strictly larger than the order of $ s $.
    
    The submodularity of $ U $ now implies that both of the other two corner separations, that is, $ \vr\meet\vs $ and $ \vr\meet\sv $, have order strictly less than the order of $ r $. Therefore both $ P $ and $ P' $ orient both of these corner separations. By possibly switching the roles of $ P $ and $ P' $ we may assume that $ \rv\in P $ and $ \vr\in P' $. Then $ P' $ contains both $ \vr\meet\vs $ and $ \vr\meet\sv $ due to consistency, since both of these corner separations are distinct from $ r $ as unoriented separations by the assumption that $ r $ and $ s $ cross.
    
    But now the assumption that $ r $ distinguishes $ P $ and $ P' $ efficiently implies that neither of the two corner separations $ \vr\meet\vs $ and $ \vr\meet\sv $ can distinguish $ P $ and $ P' $, since the corner separations have strictly lower order than $ r $. Therefore $ P $ contains $ \vr\meet\vs $ and $ \vr\meet\sv $ as well. However, by $ \rv\in P $, this contradicts the robustness of $ P $, which forbids exactly this configuration.
\end{proof}

Let us now deduce the non-canonical \cref{thm:Profiles_canon} from \cref{thm:splinter}:

\begin{proof}[Proof of the non-canonical \cref{thm:Profiles_canon}]\hypertarget{prfprofilesnoncanon}{}
By \cref{lem:eff_splinter} the collection $\A_\P$ of the sets $ A_{P,P'} $ splinters. Thus by \cref{thm:splinter} we can pick an element from each set $ A_{P,P'} $ in $ \A_\P $ in such a way that the set $ T $ of these elements is nested. Let us show that this set $ T $ is as claimed.

For (i), let $ P $ and $ P' $ be two profiles in $ \P $. As $ T $ meets the set $ A_{P,P'} $, some element of $ T $ distinguishes $ P $ and $ P' $ by definition of $ A_{P,P'} $.

For (ii), observe that every element of $ T $ lies in some $ A_{P,P'} $ and hence distinguishes a pair of profiles in $ \P $ efficiently.

Finally, (iv) follows from the fact that all sets $ A_{P,P'} $ in $ \A_P $ are regular if every profiles in $ \P $ is regular, which implies that $ T $ is a regular tree set in that case.
\end{proof}

\subsection{Sequences of submodular separation systems}\label{sec:appl:sequences}

Let us, once more, compare \cref{thm:Daniel} and \cref{thm:Profiles_canon}. The first of these has the advantage that it does not depend on any order function and thus applies to a wider class of universes of separations; on the other hand, for those universes that do have an order function, the latter theorem is much more flexible and powerful, since it not only distinguishes all distinguishable profiles across all orders simultaneously, but also does so efficiently.

Our aim in this section is to establish \cref{thm:sequence} which combines the advantages of both \cref{thm:Daniel} and \cref{thm:Profiles_canon} (without canonicity), i.e., which is not dependent on the existence of some order function, but which is as powerful and efficient as \cref{thm:Profiles_canon} if such an order function does exist.

Concretely, we shall answer the following question, which inspired this research:
\begin{center}\em
    If $S_1 \sub S_2 \sub\ldots\sub S_n$ is an ascending sequence of structurally submodular separations systems exhausting a universe of separations $U$, does there exist a nested set of separations 
    which efficiently distinguishes all the\\ maximal profiles in~$U$? 
\end{center}
Let us substantiate this question with rigorous definitions of the terms involved.

We call a sequence $S_1\sub S_2\sub \ldots\sub S_n\sub U$ of submodular separation systems in a universe $U$ \emph{compatible} if for all pairs $s_i\in S_i$ and $s_j\in S_j$ with $ i\le j $, either $S_i$ contains at least two corner separations of $ s_i $ and $ s_j $, or $ S_j $ contains at least three corner separations of $ s_i $ and~$ s_j $.

Observe that if $ U $ comes with a submodular order function $ |\cdot| $ and the $ S_i $ are defined as in~\cref{sec:appl:profiles_noncanonical}, i.e., if $ S_i $ is the set of all separations in $ U $ of order $ <i $, then the sequence $ S_1\sub S_2\sub \ldots \sub S_n\sub U $ is a compatible sequence of submodular separation systems.

A \emph{profile in} $\mathcal{S} = (S_1, \dots, S_n)$ is a profile of one of the $S_i$.

A separation $s\in S_n$ \emph{distinguishes} two profiles $P$ and $Q$ in $\mathcal{S}$ if there are orientations of $s$ such that $\vs\in P $ and $ \sv\in Q$.
The separation $ s $ distinguishes $ P $ and $ Q $ {\em efficiently} if $ s\in S_i $ for every $ S_i $ which contains a separation that distinguishes $ P $ and~$ Q $.

Note once more that, as above, these notions of profiles and efficient distinguishers coincide with their usual definitions as given in~\cref{sec:appl:profiles_noncanonical} if $ U $ has a submodular order function and the $ S_i $ are the subsets of $ U $ containing all separations of order~$ {<i} $.

We also require a structural formulation of the concept of robustness from~\cite{ProfilesNew}: \\
A set $\P$ of profiles in $\mathcal{S}$ is \emph{robust} if for all $P, Q, Q'\in \P$ the following holds:
for every $ \vr\in Q\cap Q' $ with $ \rv\in P $ and every $ s $ which distinguishes $ Q $ and $ Q' $ efficiently, if $ s\in S_j $, then there is an orientation $ \vs $ of $ s $ such that either $ (\rv\join\vs)\in P $ or $ (\vr\join\vs)\in\vS_j $.

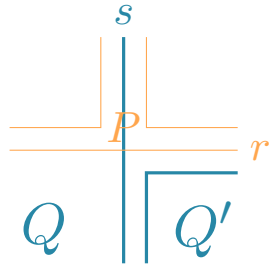
\begin{figure}[h]\centering
    \begin{tikzpicture}[scale=.15]
    \draw[very thick,colorB] (  0,-10) -- (  0, 10);
    \node[colorB, scale=1.5] at (0,12) {$s$};
    \node[colorB, scale=2] at ( 7,-7) {$Q'$};
    \node[colorB, scale=2] at (-7,-7) {$Q$};
    \draw[colorA] (-10,  0) -- ( 10,  0);
    \node[colorA, scale=1.5] at (12, 0) {$r$};
    \node[colorA, scale=1.5] at (0, 2) {$P$};
    \draw[very thick, colorB] (2, -10) |- (10, -2);
    \draw[colorA] (2, 10) |- (10, 2);
    \draw[colorA] (-2, 10) |- (-10, 2);
    \end{tikzpicture}
    \caption{Robustness.}
\end{figure}

With the above definitions we are now able to formally state and prove \cref{thm:sequence}, which includes both \cref{thm:Daniel} and the non-canonical \cref{thm:Profiles_canon} (and hence \cref{thm:GMX}) as special cases:

{
    \SequencesThm*
}

Since the proof of \cref{thm:sequence} runs along very similar lines as the \hyperlink{prfprofilesnoncanon}{proof} of \cref{thm:Profiles_canon} in the previous section we only sktech it here:

\begin{proof}[Sketch of proof.]
    For every pair $ P,P' $ of distinguishable profiles in $ \P $ let $ A_{P,P'} $ be the set of all $ s\in S_n $ that distinguish $ P $ and $ P' $ efficiently. The assertion of \cref{thm:sequence} follows directly from \cref{thm:splinter} if we can show that the family $ \A $ of these sets $ A_{P,P'} $ splinters.
    
    So let $ r\in A_{P,P'} $ and $ s\in A_{Q,Q'} $ be given. If $ r $ and $ s $ are nested there is nothing to show, so suppose they cross. Let $ i $ and $ j $ be minimal integers such that $ r\in S_i $ and $ s\in S_j $; we may assume without loss of generality that~$ i\le j $.
    
    If $ r $ distinguishes $ Q $ and $ Q' $, then $ r\in A_{Q,Q'} $, so suppose not, that is, suppose that some orientation $ \vr $ of $ r $ lies in both $ Q $ and~$ Q' $.
    
    If one of the two corner separations $ \vr\join\vs $ and $ \vr\join\sv $ lies in $ \vS_j $, then that separation distinguishes $ Q $ and $ Q' $ by consistency and the profile property and hence would lie in~$ A_{Q,Q'} $. So we may suppose that neither of these two corner separations lies in~$ \vS_j $. The compatability of $ \S $ then implies that both of the other two corner separations, $ \rv\join\vs $ and $ \rv\join\sv $, lie in~$ S_i $.
    
    By possibly switching the roles of $ P $ and $ P' $ we may assume that $ \vr\in P' $ and $ \rv\in P $. Then the robustness of $ \P $ implies that $ P $ contains either $ \rv\join\vs $ or $ \rv\join\sv $. This corner separation then lies in $ A_{P,P'} $ due to the consistency of $ P' $.
\end{proof}

\cref{thm:sequence} directly implies both \cref{thm:Daniel} and the non-canonical \cref{thm:Profiles_canon}: for the first theorem, consider the singleton sequence $ S_1=S $; and for the latter, take as $ S_i $ the set of all separations of order $ {<i} $ and let $ n $ be large enough that $ S_n=U $.

\subsection{Clique-separations in finite graphs}\label{sec:appl:cliques_noncanonical}

For a finite graph $ G $ a separation $(A,B)$ of $G$ is a \emph{clique separation} if the induced subgraph $G[A\cap B]$ is a complete graph. Clique separations in graphs have been studied by various people over the course of the last century~\cites{wagner,halin}.
More recently clique separations have received quite some attention in theoretical computer science (see for instance~\cites{LEIMER1993,berry2010,coudert:clique_decomp}) following Tarjan's work~\cite{tarjan1985} on their algorithmic aspects.

In~\cite{AbstractTangles} it was shown that the theory of submodular separation systems can be applied to clique separations of finite graphs to deduce the existence of certain nested distinguishing sets. Using \cref{thm:splinter} directly instead of \cref{thm:Daniel}, we are able to obtain a stronger result than the one given in~\cite{AbstractTangles}, much in the same way that \cref{thm:sequence} improves upon \cref{thm:Daniel}.

For this section let $ G=(V,E) $ be a finite graph, $ \vU=\vU(G) $ the universe of separations of $ G $, and $\vS=\vS(G)\subseteq \vU$ the separation system of all clique separations of~$G$. Consequently $ \vS_k=\vS_k(G) $ is the set of all clique-separations in $G$ of order less than $k$, i.e., the set of all $(A,B)\in \vS$ such that  $|A\cap B|<k$.

It was shown in \cite{AbstractTangles}*{Lemma~17} that $S$ is a submodular separation system. Following their proof, we can show that in fact every $S_k\sub S$ is a submodular separation system, and that these extend each other in a way similar to the ordinary $S_k$ of $G$:

\begin{LEM}\label{lem:clique_corners}
    Let $ r $ and $ s $ be two crossing clique separations with $ \abs{r}\le\abs{s} $. Then there are orientations $ \vr $ and $ \vs $ of $ r $ and $ s $ such that $ (\rv\meet\sv),(\rv\meet\vs), $ and $ (\vr\meet\sv) $ are clique separations with $ \abs{\rv\meet\sv}\le\abs{r}$ and $\abs{\rv\meet\vs}\le\abs{r} $ as well as $ \abs{\vr\meet\sv}\le\abs{s} $. Moreover, if $ \abs{\rv\meet\sv}=\abs{r}=\abs{s} $, then $ (\vr\meet\vs) $ is also a clique separation with $ \abs{\vr\meet\vs}\le\abs{r} $.
\end{LEM}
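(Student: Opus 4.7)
The plan is to exploit the assumption that the separators of $r$ and $s$ are themselves cliques in $G$. Writing $r=\{A,B\}$ and $s=\{C,D\}$, I set $X\coloneqq A\cap B$ and $Y\coloneqq C\cap D$; thus $G[X]$ and $G[Y]$ are complete.

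The key structural observation I would establish first is that either $X\sub C$ or $X\sub D$, and likewise either $Y\sub A$ or $Y\sub B$. Indeed, if $X$ contained both a vertex $x_1\in C\sm D$ and a vertex $x_2\in D\sm C$, then the edge $x_1x_2$ would have to exist (because $X$ is a clique) yet be forbidden by the separation~$s$; so one of $X\cap(C\sm D)$ and $X\cap(D\sm C)$ is empty, which gives $X\sub D$ or $X\sub C$, respectively. The analogous argument for $Y$ with $r$ in place of $s$ works identically. By choosing orientations of $r$ and $s$ appropriately, I may therefore assume $\vr=(A,B)$ and $\vs=(C,D)$ with $X\sub C$ and $Y\sub A$.

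With these orientations fixed, one has $X\cap D=X\cap Y$ (because $X\sub C$) and $Y\cap B=X\cap Y$ (because $Y\sub A$), and the separators of the four corners simplify accordingly: the separator of $\rv\meet\sv=(B\cap D,A\cup C)$ is $(X\cap D)\cup(Y\cap B)=X\cap Y$; that of $\rv\meet\vs=(B\cap C,A\cup D)$ is $(X\cap C)\cup(Y\cap B)=X$; that of $\vr\meet\sv=(A\cap D,B\cup C)$ is $(X\cap D)\cup(Y\cap A)=Y$; and that of $\vr\meet\vs=(A\cap C,B\cup D)$ is $(X\cap C)\cup(Y\cap A)=X\cup Y$. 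The first three separators each lie inside one of the cliques $X$ or $Y$, so the corresponding corners are clique separations; the orders $|X\cap Y|\le|X|=|r|$, $|X|=|r|$ and $|Y|=|s|$ match the three bounds claimed in the lemma.

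For the final clause, if $|\rv\meet\sv|=|r|=|s|$ then $|X\cap Y|=|X|=|Y|$, which forces $X=Y$; the fourth separator $X\cup Y$ then coincides with $X$, hence is a clique of order $|r|$, making $\vr\meet\vs$ a clique separation of the required order. The only genuinely nontrivial step is the structural observation about $X$ and $Y$ in the second paragraph; once the orientations have been chosen to realise $X\sub C$ and $Y\sub A$, the rest is pure set-theoretic simplification.
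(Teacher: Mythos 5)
Your proof is correct and follows essentially the same route as the paper: observe that the clique separator $A\cap B$ must lie in one side of $s$ and $C\cap D$ in one side of $r$, fix orientations with $A\cap B\sub C$ and $C\cap D\sub A$, and then read off the separators of the corner separations. The only (harmless) difference is that you identify the four corner separators exactly as $X\cap Y$, $X$, $Y$ and $X\cup Y$, whereas the paper merely notes they are subsets of $X$ or $Y$, which already suffices.
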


\begin{proof}
Let $s=\{A,B\}$ and $t=\{C,D\}$ be two crossing clique separations of $ G $ with $ \abs{r}\le\abs{s} $. Since $ C\cap D $ is a separator of $ G $, and all vertices in $ A\cap B $ are pairwise adjacent, $ A\cap B $ must be a subset of either $ C $ or $ D $. Similarly $ C\cap D $ must be a subset of either $ A $ or $ B $. By renaming the sets if necessary we may assume that $ A\cap B\sub C $ and $ C\cap D\sub A $. We orient $ r $ as $ \vr=(A,B) $ and $ s $ as $ \vs=(C,D) $; let us show that these orientations are as claimed.

Observe first that the separators of both $ (\rv\meet\sv) $ and $ (\rv\meet\vs) $ are subsets of $ A\cap B $, showing that these are clique separations of order at most $ \abs{r}=\abs{A\cap B} $. Similarly, the separator of the corner separation $ (\vr\meet\sv) $ is a subset of $ C\cap D $, and hence $ (\vr\meet\sv) $ is a clique separation of order at most $ \abs{s}=\abs{C\cap D} $.

Finally, suppose that $ \abs{\rv\meet\sv}=\abs{r}=\abs{s} $. Then, since the separator of $ (\rv\meet\sv) $ is a subset of both $ A\cap B $ and of $ C\cap D $, this separator must in fact be equal to both $ A\cap B $ and $ C\cap D $. Consequently the separator of $ (\vr\meet\vs) $ also equals $ A\cap B=C\cap D $, which shows that $ (\vr\meet\vs) $ is a clique separation of order at most $ r $.
\end{proof}

We can now consider profiles in $ G $ with respect to these separation systems. We will use the same notion of profiles as was introduced in~\cite{ProfilesNew}: a \emph{profile} $P$ of order $k$ is a consistent orientation of $S_k$ satisfying the profile property \[
    \forall \,\vr,\vs\in P \colon (\rv\meet\sv)\notin P\,. \tag{P}
\]
Every hole in $G$ (i.e.\ an induced cycle of length at least $4$) defines a profile $ P $ of order $ \abs{V} $ in $ G $ by letting $ P $ contain a separation $ (A,B)\in\vS $ of order less than $ \abs{V} $ if and only if that hole is contained in~$ G[B] $. In an analogous way every clique of size $k$ defines a profile of order $k$ in $ G $.
Let us denote by $\P_k$ the set of all profiles of order $k$. 

As usual, given two distinguishable profiles $P$ and $P'$, let \[A_{P,P'}:=\{a\in S \mid a \text{  distinguishes $P,P'$ efficiently}\}.\]

We will show that the collection of these $A_{P,P'}$ splinters.

\begin{LEM}\label{lem:clique_eff_splinter}
    For any set $\P$ of profiles the collection \[ {(A_{P,P'}\mid P,P'\tn{ distinguishable profiles})} \] splinters.
\end{LEM}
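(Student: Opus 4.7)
My plan is to adapt the strategy of Lemma~\ref{lem:eff_splinter} (the efficient splinter lemma for submodular universes) to the clique setting, using Lemma~\ref{lem:clique_corners} in place of submodularity. Take crossing $r \in A_{P,P'}$ and $s \in A_{Q,Q'}$; by symmetry I may assume $|r|\le|s|$. If $Q$ and $Q'$ orient $r$ differently then $r$ distinguishes them, and the efficiency of $s$ combined with $|r|\le|s|$ forces $|r|=|s|$, so $r\in A_{Q,Q'}$ and we are done. Otherwise some orientation $\rho_r$ of $r$ lies in $Q\cap Q'$.

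Now I invoke Lemma~\ref{lem:clique_corners} to choose orientations $\vr,\vs$ for which $\vr\join\vs$, $\vr\join\sv$, and $\rv\join\vs$ are all clique separations, of orders at most $|r|$, $|r|$, and $|s|$ respectively. The key observation is that the corner $\rho_r\join\vs$ is always one of these three `good' corners, no matter whether $\rho_r=\vr$ or $\rho_r=\rv$: hence $\rho_r\join\vs$ is a clique separation of order $\le|s|$, while the possibly non-clique corner $\rv\join\sv$ is sidestepped entirely.

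To conclude I verify that $\rho_r\join\vs$ distinguishes $Q$ and $Q'$. Since $s$ distinguishes them, one of $\vs,\sv$ lies in each; by symmetry assume $\vs\in Q$ and $\sv\in Q'$. Both $Q$ and $Q'$ orient the clique separation $\rho_r\join\vs$ (its order is $\le|s|$, and both orient $s$ since $s\in A_{Q,Q'}$). The profile property applied to $\rho_r,\vs\in Q$ forces $\rho_r\join\vs\in Q$, while the consistency of $Q'$ together with $\sv\in Q'$ and the strict inequality $\vs<\rho_r\join\vs$ (a consequence of $r$ and $s$ crossing, using $\rho_r\not\leq\vs$) forces $\rho_r\join\vs\notin Q'$. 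Hence $\rho_r\join\vs$ distinguishes $Q,Q'$ at order $\le|s|$, and the efficiency of $s$ then promotes this to equality of orders, yielding $\rho_r\join\vs\in A_{Q,Q'}$.

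The main subtlety is avoiding the `bad' corner $\rv\join\sv$, which need not be a clique separation and so may not even be oriented by $Q$ or $Q'$: always taking the join with the specific orientation $\vs$ provided by Lemma~\ref{lem:clique_corners}, rather than with whichever orientation of $s$ a given profile happens to contain, is what keeps us among the three guaranteed clique corners of Lemma~\ref{lem:clique_corners} regardless of how $Q$ and $Q'$ orient $r$. Notably, in contrast to the submodular proof of Lemma~\ref{lem:eff_splinter}, no appeal to robustness or to the $P,P'$ side of the setup is needed; the low-order guarantee in Lemma~\ref{lem:clique_corners} does all the work.
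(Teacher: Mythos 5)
Your proof is correct and takes essentially the same route as the paper's: after disposing of the case that $r$ itself distinguishes $Q$ and $Q'$, you use \cref{lem:clique_corners} to select a corner of $r$ and $s$ that avoids the one possibly non-clique corner and has order at most $\abs{s}$, and then the profile property, consistency, and the efficiency of $s$ place it in $A_{Q,Q'}$. The paper phrases the corner choice as ``at least one of $(\rv\meet\sv)$ and $(\rv\meet\vs)$ is a clique separation of order at most $\abs{s}$'' (with $\vr$ the orientation in $Q\cap Q'$), which is exactly your observation that $\rho_r\join\vs$ is always among the three guaranteed clique corners.
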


\begin{proof}
    Let $P,P'$ and $Q,Q'$ be two pairs of distinguishable profiles in $\P$ and let $r\in A_{P,P'}$ and $s\in A_{Q,Q'}$ be two distinct separations. We need to show that we have either $ r\in A_{Q,Q'} $ or $ s\in A_{P,P'} $, or that some corner separation of $ r $ and $ s $ lies in~$ {A_{P,P'}\cup A_{Q,Q'}} $. If $ r $ and $ s $ are nested, then the latter is immediate, so suppose that $ r $ and $ s $ cross. By switching their roles if necessary we may further assume that $\abs{r}\le\abs{s} $.
    
    Since $ Q $ orients $ s $, and $ \abs{r}\le\abs{s} $, the profile $ Q $ contains some orientation $ \vr $ of $ r $. Similarly $ Q' $ contains some orientation of $ r $. If $ \rv\in Q' $, then $ r $ distinguishes $ Q $ and $ Q' $, and by $ \abs{r}\le\abs{s} $ it does so efficiently, giving $ r\in A_{Q,Q'} $. So suppose that $ \vr\in Q' $.
    
    By~\cref{lem:clique_corners} at least three of the corner separations of $ r $ and $ s $ are clique separations of order at most $ \abs{s} $. Thus at least one of $ (\rv\meet\sv) $ and $ (\rv\meet\vs) $ is a clique separation of order at most $ \abs{s} $. This corner separation then distinguishes $ Q $ and $ Q' $ by the profile property, and in fact it does so efficiently, since its order is at most $ \abs{s} $, yielding the desired corner separation in $ A_{Q,Q'} $.
\end{proof}

It is now straightforward to use~\cref{thm:splinter} to obtain the following theorem:

\begin{THM}\label{thm:cliques}
There is a nested set of separations which efficiently distinguishes all the distinguishable profiles in $\bigcup_{i=1}^n \P_i$.
\end{THM}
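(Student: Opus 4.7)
The plan is to apply the Splinter Lemma (\cref{thm:splinter}) directly, following the archetypical blueprint used throughout this section to deduce \cref{thm:Daniel}, the non-canonical \cref{thm:Profiles_canon}, and \cref{thm:sequence}. First I would set $\P \coloneqq \bigcup_{i=1}^n \P_i$ and consider the family
\[ \A \coloneqq (A_{P,P'}\mid P\ne P'\tn{ distinguishable profiles in }\P) \]
of candidate sets as defined just above the theorem. Each $A_{P,P'}$ is non-empty by the very definition of `distinguishable', so $\A$ is a family of non-empty subsets of $S$; moreover, since $G$ is finite, there are only finitely many profiles and hence only finitely many such pairs, putting us squarely within the hypotheses of \cref{thm:splinter}.

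By \cref{lem:clique_eff_splinter} the family $\A$ splinters. Applying \cref{thm:splinter} to $\A$ then yields a nested set $N\sub S$ containing an element of every $A_{P,P'}$; equivalently, for every pair of distinguishable profiles in $\bigcup_{i=1}^n \P_i$ the set $N$ contains a clique separation that distinguishes them efficiently, which is exactly what the theorem demands.

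Since both the splintering property (\cref{lem:clique_eff_splinter}) and the corner-separation calculus it rests on (\cref{lem:clique_corners}) have already been established, this last step is essentially a one-line invocation of \cref{thm:splinter}, and no real obstacle remains. The one conceptual subtlety worth flagging is that we pool profiles from \emph{all} orders $\P_1,\dots,\P_n$ into a single family \emph{before} invoking the Splinter Lemma; this pooling is precisely what lets the resulting nested set handle profiles of differing orders simultaneously, in analogy with how \cref{thm:sequence} strengthens \cref{thm:Daniel}.
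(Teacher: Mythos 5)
Your proposal is correct and is essentially identical to the paper's own proof: both invoke \cref{lem:clique_eff_splinter} to see that the family of sets $A_{P,P'}$ (over all distinguishable pairs of profiles in $\bigcup_{i=1}^n\P_i$) splinters, and then apply \cref{thm:splinter} to obtain a nested set meeting every $A_{P,P'}$, which by definition distinguishes each such pair efficiently. The extra remarks on non-emptiness, finiteness, and pooling profiles of all orders are accurate but add nothing beyond the paper's one-line argument.
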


\begin{proof}
By \cref{lem:clique_eff_splinter}, we can apply \cref{thm:splinter} to \[ {(A_{P,P'}\mid P,P'\tn{ distinguishable profiles})}, \] resulting in the claimed nested subset.
\end{proof}
In particular, for any two holes, a hole and a clique, or two cliques if there is a clique separation which distinguishes them, then our nested set contains one such separation of minimal order. As usual, such a nested set can be transformed into a tree-decomposition of $ G $ (see~\cite{TreeSets} for details). Thus  $G$ admits a tree-decomposition whose adhesion sets are cliques and which efficiently distinguishes all the holes and cliques distinguishable by clique separations in $G$. Such a decomposition is similar to, but not exactly the same as, the decomposition constructed by R.~E.~Tarjan in~\cite{tarjan1985}.

We will see in \cref{sec:appl:cliques_canonical} that such a decomposition can in fact be chosen canonically, i.e., to be invariant under automorphisms of $G$.

\section{Canonical Splinter Lemma}\label{sec:canonical}

As we saw in the previous section, \cref{thm:splinter} is already strong enough to imply most of \cref{thm:Profiles_canon}, but crucially does not guarantee the canonicity asserted in (iii). In this section we wish to prove a version of \cref{thm:splinter} using a stronger set of assumptions from which we can deduce \cref{thm:Profiles_canon} in full: we want to find, for a family $ \A=(A_i\mid i\in I) $ of subsets of some universe $ U $, a nested set $ N=N(\A) $ meeting all the $ A_i $ that is {\em canonical}, i.e., which only depends on invariants of $ \A $. More formally, we want to find $ N=N(\A) $ in such a way that if $ \A'=(A_i'\mid i\in I) $ is another family of subsets of some other universe $ U' $ that also meets the assumptions of our theorem, and $ \phi $ is an isomorphism of separation systems between $ \bigcup_{i\in I}A_i $ and $ \bigcup_{i\in I}A_i' $ with $ \phi(A_i)=A_i' $ for all $ i\in I $, we ask that $ N(\A')=\phi(N(\A)) $. In particular, the nested set found by our theorem should not depend on the universe into which the family $ \A $ is embedded.

The assumptions of \cref{thm:splinter} are not sufficient to guarantee the existence of such a canonical set.
Consider the example where we have just two separations, $s$ and $t$, which are crossing and let $\A = (A_1) = (\{s,t\})$. Note that $\A$ splinters, but there may be an automorphism that swaps the two separations so the choice of any single one of them is non-canonical.
Since the separations are crossing we cannot use both of them for our nested set either.

For obtaining a canonical nested set, one crucial ingredient will be the notion of {\em extremal} elements of a set of separations, which was already used in~\cite{ProfilesNew}. Given a set $ A\sub U $ of (unoriented) separations, an element $ a\in A $ is {\em extremal} in $ A $, or an {\em extremal element} of $ A $, if $ a $ has some orientation $ \va $ that is a maximal element of $ \vA $. (Recall that $\vA$ is the set of orientations of separations in $A$.) The set of extremal elements of a set of separations is an invariant of separation systems in the following sense: if $ E $ is the set of extremal elements of some set $ A\sub S $ of separations, and $ \phi $ is an isomorphism between $ \vS $ and some other separation system, then $ \phi(E) $ is precisely the set of extremal separations of $ \phi(A) $. Moreover, the extremal separations of a set $ A\sub U $ are nested with each other under relatively weak assumptions: for instance, it suffices that for any two separations in $ A $ at least two of their corner separations also lie in~$ A $.

Let us formally state a set of assumptions under which we can prove a canonical version of \cref{thm:splinter}. Given two separations $ r $ and $ s $ and two of their corner separations $ c_1 $ and $ c_2 $, we say that $ c_1 $ and $ c_2 $ are {\em from different sides of} $ r $ if, for orientations of $ c_1 $, $ r $, and $ s $ with $ \vc_1=(\vr\meet\vs) $, there is an orientation $ \vc_2 $ of $ c_2 $ such that either $ \vc_2=(\rv\meet\vs) $ or $ \vc_2=(\rv\meet\sv) $. Note that $ c_1 $ and $ c_2 $ being from different sides of $ r $ does not imply that $ c_1 $ and $ c_2 $ are distinct separations; consider for instance the edge case that $ r=s=c_1=c_2 $.

Let $ \A=(A_i\mid i\in I) $ be a finite collection of non-empty finite subsets of $ U $ and let $ \curlyle $ be any partial order on $ I $. We write $i \prec j$ if and only if $i\curlyle j$ and $i\neq j$. We say that $ \A $ {\em splinters hierarchically} if for all $ a_i\in A_i $ and $ a_j\in A_j $ the following two conditions hold:

\begin{enumerate}[label={(\arabic*)}]
\item\label{property:hierarchically1_diff} If $ i\prec j $, either some corner separation of $ a_i $ and $ a_j $ lies in $ A_j $, or two corner separations of $ a_i $ and $ a_j $ from different sides of $ a_i $ lie in $ A_i $.

\item\label{property:hierarchically2_equal} If neither $i\prec j$ nor $j\prec i$, there are $ k\in\{i,j\} $ and corner separations $ c_1 $ and $ c_2 $ of $ a_i $ and $ a_j $ from different sides of $ a_k $ such that $ c_1\in A_k $ and $ c_2\in A_i\cup A_j $.
\end{enumerate}

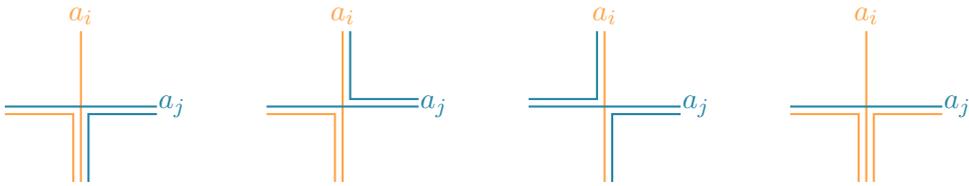
\begin{figure}[h]
    \centering
    \begin{tikzpicture}[scale=.1]
    \draw[thick,colorA] (  0,-10) -- (  0, 10);
    \node[colorA, scale=1] at (0,12) {\clap{$a_i$}};
    \draw[thick,colorB] (-10,  0) -- ( 10,  0);
    \node[colorB, scale=1] at (12, 0) {$a_j$};
    \draw[colorB,thick] (1, -10) -- (1, -1) -- (10, -1);
    \draw[colorA,thick] (-1, -10) -- (-1, -1) -- (-10, -1);
    \end{tikzpicture}\hfil
    \begin{tikzpicture}[scale=.1]
    \draw[thick,colorA] (  0,-10) -- (  0, 10);
    \node[colorA, scale=1] at (0,12) {\clap{$a_i$}};
    \draw[thick,colorB] (-10,  0) -- ( 10,  0);
    \node[colorB, scale=1] at (12, 0) {$a_j$};
    \draw[colorA,thick] (-1, -10) -- (-1, -1) -- (-10, -1);
    \draw[colorB,thick] (1, 10) -- (1, 1) -- (10, 1);
    \end{tikzpicture}\hfil
    \begin{tikzpicture}[scale=.1]
    \draw[thick,colorA] (  0,-10) -- (  0, 10);
    \node[colorA, scale=1] at (0,12) {\clap{$a_i$}};
    \draw[thick,colorB] (-10,  0) -- ( 10,  0);
    \node[colorB, scale=1] at (12, 0) {$a_j$};
    \draw[colorB,thick] (1, -10) -- (1, -1) -- (10, -1);
    \draw[colorB,thick] (-1, 10) -- (-1, 1) -- (-10, 1);
    \end{tikzpicture}\hfil
    \begin{tikzpicture}[scale=.1]
    \draw[thick,colorA] (  0,-10) -- (  0, 10);
    \node[colorA, scale=1] at (0,12) {\clap{$a_i$}};
    \draw[thick,colorB] (-10,  0) -- ( 10,  0);
    \node[colorB, scale=1] at (12, 0) {$a_j$};
    \draw[colorA,thick] (1, -10) -- (1, -1) -- (10, -1);
    \draw[colorA,thick] (-1, -10) -- (-1, -1) -- (-10, -1);
    \end{tikzpicture}
    
    \caption{The possible configurations in \ref{property:hierarchically2_equal} in the definition of \emph{splinter hierarchically}, up to symmetry.}
\end{figure}

In particular if $ \curlyle $ is the trivial partial order on $ I $ in which all $ i\ne j $ are incomparable, then $ \A $ splinters hierarchically if and only if \ref{property:hierarchically2_equal} holds for all $ a_i\in A_i $ and $ a_j\in A_j $; this special case which ignores the partial order on $ I $ is perhaps the cleanest form of an assumption that suffices for a canonical nested set meeting all $ A_i $ in $ \A $. The reason we need to allow a partial order $ \curlyle $ on $ I $ and the slightly weaker condition in \ref{property:hierarchically1_diff} for comparable elements of $ I $ is that otherwise we would not be able to deduce \cref{thm:Profiles_canon} in full from our main theorem of this section due to a quirk in the way that robustness is defined for profiles in~\cite{ProfilesNew} (see \cref{sec:applications_canonical}).

Our first lemma enables us to find a canonical nested set inside $ \bigcup_{i\in I}A_i $ for a collection of sets $ A_i $ whose indexing set is an antichain:

\begin{LEM}\label{lem:ex_nested}
    Let $ (A_i\mid i\in I) $ be a collection of subsets of $ U $ that splinters hierarchically. If $ K\sub I $ is an antichain in $ \curlyle $, then the set of extremal elements of $ \bigcup_{k\in K}A_k $ is nested.
\end{LEM}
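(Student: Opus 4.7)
The plan is to argue the contrapositive: two extremal elements of $B \coloneqq \bigcup_{k\in K}A_k$ cannot cross. Suppose for contradiction that extremal $r,s\in B$ cross, with $r\in A_i$ and $s\in A_j$ for some $i,j\in K$. Since $K$ is an antichain, neither $i\prec j$ nor $j\prec i$ holds (this automatically covers the case $i=j$, as $i\prec i$ is false), so the second hierarchical splinter condition~\ref{property:hierarchically2_equal} applies. It yields a $k\in\{i,j\}$ and two corner separations $c_1,c_2$ of $r$ and $s$ from different sides of $a_k$, with $c_1\in A_k$ and $c_2\in A_i\cup A_j$; in particular both $c_1$ and $c_2$ lie in $B$.

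By the symmetry between $r$ and $s$ it suffices to treat the case $k=i$, so that $c_1$ and $c_2$ are from different sides of $r$. Unpacking the definition, I may fix orientations $\vr,\vs$ of $r,s$ and orientations $\vc_1,\vc_2$ of $c_1,c_2$ with $\vc_1=\vr\meet\vs$ and $\vc_2\in\{\rv\meet\vs,\,\rv\meet\sv\}$. Applying the involution and DeMorgan's law, the opposite orientations satisfy $(\vc_1)^*=\rv\join\sv$ and $(\vc_2)^*\in\{\vr\join\sv,\,\vr\join\vs\}$, and both of these lie in $\vB$ since $c_1,c_2\in B$.

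Now the crossing assumption does the work: since $r$ and $s$ cross, no orientation of $r$ lies below any orientation of $s$, and in particular $\sv\not\le\rv$ as well as $\vs,\sv\not\le\vr$. Hence $\rv<\rv\join\sv=(\vc_1)^*$ and $\vr<(\vc_2)^*$, which means that neither $\vr$ nor $\rv$ is maximal in $\vB$. This contradicts the extremality of $r$ in $B$, completing the proof; the case $k=j$ is symmetric with the roles of $r$ and $s$ swapped.

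The only real care needed is unpacking the phrase ``from different sides of $a_k$'' so that $c_1$ and $c_2$ between them strictly dominate \emph{both} orientations of the same separation $r$; once that is arranged the crossing hypothesis knocks out extremality in a single line, and the antichain condition on $K$ is precisely what is needed to guarantee that condition~\ref{property:hierarchically2_equal} (rather than the weaker~\ref{property:hierarchically1_diff}) is available.
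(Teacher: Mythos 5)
Your proof is correct and follows essentially the same route as the paper: assume two extremal elements cross, invoke condition~\ref{property:hierarchically2_equal} (available precisely because $K$ is an antichain, including the case $i=j$), and use the crossing to turn the two corner separations from different sides into elements of $\vB$ strictly above orientations of an extremal separation. The only cosmetic difference is the final bookkeeping: the paper rules out three of the four corners via the maximality of the witnessing orientations $\va_i,\va_j$ and notes the two required corners cannot both be the fourth, whereas you observe that the two corners from different sides of $r$ strictly dominate $\vr$ and $\rv$ respectively, so that extremality of $r$ alone already yields the contradiction.
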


\begin{proof}
    Suppose that $ K\sub I $ is an antichain and that for some $ i,j\in K $ there are $ a_i\in A_i $ and $ a_j\in A_j $ such that $ a_i $ and $ a_j $ are extremal in $ \bigcup_{k\in K}A_k $ but cross. Let $ \va_i $ and $ \va_j $ be the orientations of $ a_i $ and $ a_j $ witnessing their extremality. As $ a_i $ and $ a_j $ cross, there are three ways of orienting $a_i$ and $a_j$ such that the supremum of this orientation is strictly larger than $ \va_i $ or $ \va_j $. Hence none of these corner separations can lie in $ A_i\cup A_j $, since that would contradict the maximality of $ \va_i $ or $ \va_j $ in $ \bigcup_{k\in K}\vA_k $. On the other hand, since neither $ i\prec j $ nor $ j\prec i $, by condition~\ref{property:hierarchically2_equal} and the assumption that $ a_i $ and $ a_j $ cross there are at least two orientations of $a_i$ and $a_j$ whose corresponding supremum lies in $ A_i\cup A_j $, causing a contradiction to the extremality of $a_i$ and $a_j$.
\end{proof}

We are now able  to prove a canonical version of the Splinter Lemma by repeatedly applying \cref{lem:ex_nested} to the collection of the $ A_i $ of $ \curlyle $-minimal index that have not yet been met by the nested set constructed so far:

\splinterThmcanonical*

\begin{proof}
    We proceed by induction on $ \abs{I} $. If $ \abs{I}=1 $ we can choose as $ N $ the set of extremal elements of $ A_i $, which is nested by \cref{lem:ex_nested} and clearly canonical.
    
    So suppose that $ \abs{I}>1 $ and that the claim holds for all smaller index sets. Let $ K $ be the set of minimal elements of $ I $ with respect to~$ \curlyle $. By \cref{lem:ex_nested} the set $ E=E(\A) $ of extremal elements of $ \bigcup_{k\in K}A_k $ is nested. Let $ J\sub I $ be the set of indices of all those $ A_j $ that do not meet $ E $, and for $ j\in J $ let $ A_j' $ be the set of all elements of $ A_j $ that are nested with $ E $. We claim that the collection $ \A'=(A_j'\mid j\in J) $ splinters hierarchically with respect to $ \curlyle $ on~$ J $. This follows from \cref{lem:fish} as soon as we show that each $ A_j' $ is non-empty.
    
    To see that each $ A_j' $ is non-empty, for $ j\in J $ let $ a_j $ be an element of $ A_j $ that crosses as few elements of $ E $ as possible. We wish to show that $ a_j $ is nested with $ E $ and thus $ a_j\in A_j' $. So suppose that $ a_j $ crosses some separation in $ E $, that is, some $ a_i\in A_i\cap E $ with $ i\in I\sm J $. Since $ i $ is a minimal element of $ I $ we have either $ i\curlyle j $ or that $ i $ and $ j $ are incomparable. We shall treat these cases separately.
    
    Consider first the case that $ i\prec j $. By condition \ref{property:hierarchically1_diff} of splintering hierarchically, either some corner separation of $ a_i $ and $ a_j $ lies in $ A_j $, or two corner separations of $ a_i $ and $ a_j $ from different sides of $ a_i $ lie in $ A_i $. The first of these possibilities contradicts the choice of $ a_j $, since that corner separation in $ A_j $ would cross fewer elements of $ E $ by \cref{lem:fish}. On the other hand, the latter of these possibilities contradicts the choice of $ a_i $ as an extremal element of $ \bigcup_{k\in K}A_k $. Thus the case $ i\curlyle j $ is impossible.
    
    Let us now consider the case that $ i $ and $ j $ are incomparable. Again, by the choice of $ a_j $, none of the corner separations of $ a_i $ and $ a_j $ can lie in $ A_j $ by \cref{lem:fish}. Therefore condition \ref{property:hierarchically2_equal} of splintering hierarchically yields the existence of a corner separation of $ a_i $ and $ a_j $ in $ A_i $ for each side of $ a_i $; this, however, contradicts the extremality of $ a_i $ in $ \bigcup_{k\in K}A_k $ as before.
    
    Therefore each of the sets $ A_j' $ with $ j\in J $ is non-empty, and hence the collection $ \A'=(A_j'\mid j\in J) $ splinters hierarchically with respect to~$ \curlyle $. Since $ \abs{J}<\abs{I} $ we may apply the induction hypothesis to this collection to obtain a canonical nested set $ N' = N(\A') $ meeting all $ A_j' $. Now $ N=N'\cup E $ is a nested subset of $ U $ which meets every $ A_i $ for $ i\in I $. It remains to show that $ N $ is canonical.
    
    To see that $ N $ is canonical let $ \phi $ be an isomorphism of separation systems between $ \bigcup_{i\in I}\vA_i $ and a subset of some universe $ U' $ such that $ \phi(\A) $ splinters hierarchically with respect to $ \curlyle $ in $ U' $. Then $ \phi(E)=E(\phi(\A)) $, i.e., the set of extremal elements of $ \bigcup_{i\in I}\phi(A_i) $ is exactly $ \phi(E) $. Therefore $ \phi(E) $ meets $ \phi(A_i) $ if and only if $ E $ meets $ A_i $. Consequently the restriction of $ \phi $ to $ \bigcup_{j\in J}\vA_j' $ is an isomorphism of separation systems between $ \bigcup_{j\in J}\vA_j' $ and its image in $ U' $ with the property that $ \phi(\A') $ splinters hierarchically with respect to $ \curlyle $ on $ J $. Moreover, for $ j\in J $, the image $ \phi(A_j') $ of $ A_j' $ is exactly the set of those separations in $ \phi(A_j) $ that are nested with $ \phi(E) $.
    
    Thus we can apply the induction hypothesis to find that $ N(\phi(\A'))=\phi(N(\A')) $. Together with the above observation that $ \phi(E(\A))=E(\phi(\A)) $ this gives
    \[ \phi(N(\A))=\phi(E(\A))\cup\phi(N(\A'))=E(\phi(\A))\cup N(\phi(\A'))=N(\phi(\A)), \]
    concluding the proof.
\end{proof}

\section{Applications of the Canonical Splinter Lemma}\label{sec:applications_canonical}

In this section we apply \cref{thm:splinter_hierarchically} to obtain a short proof of \cref{thm:Profiles_canon}, to strengthen \cref{thm:cliques} for clique separations so as to make it canonical, and finally to establish a canonical tree-of-tangles theorem for another type of separations, so-called circle separations.

\subsection{Robust profiles}\label{sec:appl:profiles_canonical}

Having established \cref{thm:splinter_hierarchically} in the previous section, we are now ready to derive the full version of~\cref{thm:Profiles_canon}. For this let $U={(\vU,\le,{}^*,\join,\meet,|\cdot|)}$ be a submodular universe of separations and $ \P $ a robust set of profiles in $ U $, and let~$ I $ be the set of all pairs of distinguishable profiles in $ \P $. As in \cref{sec:appl:profiles_noncanonical}, for $ \{P,P'\}\in I $ we let
\[ A_{P,P'}:=\menge{a\in U\mid a\tn{ distinguishes }P\tn{ and }P'\tn{ efficiently}}, \]
and let $ \A_\P $ be the family $ ({A_{P,P'}\mid \{P,P'\}\in I)} $. We furthermore define a partial order $ \curlyle $ on $ I $ by letting $\{P,P'\}\prec \{Q,Q'\}$ if and only if the order of some element of $A_{P,P'}$ is strictly lower than the order of some element of $A_{Q,Q'}$. Note that the separations in a fixed $ A_{P,P'} $ all have the same order.

We shall be able to deduce \cref{thm:Profiles_canon} from \cref{thm:splinter_hierarchically} as soon as we show that $ \A_\P $ splinters hierarchically.

\pagebreak[3]
\begin{LEM}\label{lem:profiles_hierarchically}
$\A_\P$ splinters hierarchically with respect to~$ \curlyle $.
\end{LEM}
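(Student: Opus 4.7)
The plan is to verify the two conditions in the definition of ``splinters hierarchically'' for the family $\A_\P$ with respect to the partial order $\curlyle$. Fix $\{P,P'\},\{Q,Q'\}\in I$, $r\in A_{P,P'}$, and $s\in A_{Q,Q'}$. If $r$ and $s$ are nested then they themselves serve as the required corner separations of $r,s$, so I may assume that $r$ and $s$ cross.

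For condition~\ref{property:hierarchically1_diff}, suppose $\{P,P'\}\prec\{Q,Q'\}$, so $|r|<|s|$. The argument mirrors that of \cref{lem:eff_splinter}: the efficiency of $s$ in $A_{Q,Q'}$ forces $r$ not to distinguish $Q$ from $Q'$, so I may fix $\vr\in Q\cap Q'$ and orient $s$ so that $\vs\in Q$ and $\sv\in Q'$. If $|\vr\join\vs|\le|s|$ or $|\vr\join\sv|\le|s|$, the profile property delivers that corner separation into $A_{Q,Q'}$ and we are done. Otherwise submodularity yields $|\vr\meet\vs|,|\vr\meet\sv|<|r|$, and the efficiency of $r$ in $A_{P,P'}$ forces these two low-order corners into both $P$ and $P'$. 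Now robustness of $\P$, applied with $\vr\in Q\cap Q'$ and $\rv$ in whichever of $P,P'$ contains it, demands a corner separation $\rv\join\vs'$ in that profile for some orientation $\vs'$ of $s$, since the alternative $\vr\join\vs'\in\vS_j$ fails for both orientations; but the inverse of any such $\rv\join\vs'$ is one of $\vr\meet\vs$ or $\vr\meet\sv$, which we already showed lies in the same profile --- a contradiction. Hence a corner of $r,s$ must lie in $A_{Q,Q'}$, satisfying the first disjunct of condition~\ref{property:hierarchically1_diff}.

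For condition~\ref{property:hierarchically2_equal}, suppose $\{P,P'\}$ and $\{Q,Q'\}$ are $\curlyle$-incomparable, so $|r|=|s|$. I split into four subcases according to whether $r$ distinguishes $Q,Q'$ and whether $s$ distinguishes $P,P'$, and in each one I fix WLOG orientations of $r$ and $s$ in each of $P,P',Q,Q'$. Using only the profile property and consistency, I then classify each of the four unoriented corner separations of $r,s$ as distinguishing $\{P,P'\}$, $\{Q,Q'\}$, both, or neither. The key quantitative input is that a corner distinguishing $P,P'$ must have order $\ge|r|$ (by efficiency of $r$) and a corner distinguishing $Q,Q'$ must have order $\ge|s|$ (by efficiency of $s$); combined with submodularity, which bounds $|c|+|c'|\le|r|+|s|=2|s|$ for every pair $c,c'$ of ``opposite'' corners, this pins the orders of all distinguishing corners to exactly $|s|$ and so places each of them in the corresponding $A_k$. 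In the most delicate subcase --- where neither $r$ distinguishes $Q,Q'$ nor $s$ distinguishes $P,P'$ --- an additional application of robustness, analogous to the one used in condition~\ref{property:hierarchically1_diff}, is needed to rule out the scenario in which only one of the four corners is useful. In every subcase the two useful corners so obtained lie on different sides of at least one of $r,s$, yielding an appropriate $k\in\{i,j\}$, $c_1\in A_k$, and $c_2\in A_i\cup A_j$ as required.

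The main obstacle is the bookkeeping in condition~\ref{property:hierarchically2_equal}: each subcase requires determining which orientation of each of the four corners lies in which profile, and then checking that the interplay of efficiency (which gives lower bounds on orders) and submodularity (which gives upper bounds) forces enough useful corners to be present and on the required sides of $a_k$.
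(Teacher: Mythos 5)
Your treatment of condition~\ref{property:hierarchically1_diff} is correct and is essentially the paper's own argument (the same contrapositive as in \cref{lem:eff_splinter}: if both $\vr\join\vs$ and $\vr\join\sv$ had order exceeding $\abs{s}$, submodularity puts $\vr\meet\vs$ and $\vr\meet\sv$ below $\abs{r}$, consistency and the efficiency of $r$ force them into both $P$ and $P'$, and robustness then gives a contradiction). The subcases of condition~\ref{property:hierarchically2_equal} in which $r$ distinguishes $Q,Q'$ (or, symmetrically, $s$ distinguishes $P,P'$) also follow the paper's route.

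The gap is in your ``most delicate subcase'' of condition~\ref{property:hierarchically2_equal}, where neither $r$ distinguishes $Q,Q'$ nor $s$ distinguishes $P,P'$: you assert that a robustness application ``analogous to condition~(1)'' is needed to rule out having only one useful corner, but you neither carry it out nor would it suffice, and in fact no robustness is needed here. With $\vr\in Q\cap Q'$, $\vs\in P\cap P'$ and, say, $\rv\in P'$, $\sv\in Q'$, the corners $\vr\join\sv$ and $\rv\join\vs$ would distinguish $Q,Q'$ and $P,P'$ respectively as soon as they are oriented (profile property in $Q'$ resp.\ $P'$, consistency in $Q$ resp.\ $P$), so efficiency forces $\abs{\vr\join\sv}\ge\abs{s}$ and $\abs{\rv\join\vs}\ge\abs{r}$; since $\abs{\vr\join\sv}+\abs{\rv\join\vs}\le\abs{r}+\abs{s}$ by submodularity, both orders equal $\abs{r}=\abs{s}$, so these two corners lie in $A_{Q,Q'}$ and $A_{P,P'}$ and are from different sides of $r$, which is exactly \ref{property:hierarchically2_equal}. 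The robustness argument you gesture at would at best show that one of $\vr\join\vs,\vr\join\sv$ has order at most $\abs{s}$, i.e.\ produce a single corner in $A_{Q,Q'}$; it does not supply the second corner from the other side that \ref{property:hierarchically2_equal} requires, so that subcase of your proof is not established as described. Relatedly, your claim that efficiency plus submodularity ``pins the orders of all distinguishing corners to exactly $\abs{s}$'' is too strong: the submodular bound couples only opposite corners, and a corner distinguishing one pair whose opposite corner distinguishes neither pair gets no upper bound (also, whether a corner ``distinguishes'' a pair at all depends on its order, so the classification cannot be done purely combinatorially first). The argument has to be run, as above, for the specific opposite pair of corners arising in each subcase.
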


\begin{proof}
Let $r\in A_{P,P'}$ and $s\in A_{Q,Q'}$ be given. By switching their roles if necessary we may assume that~$|r|\le |s|$. Then $Q$ and $Q'$ both orient $r$; we may assume without loss of generality that $ \vr\in Q $. We will make a case distinction depending on the way $ Q' $ orients~$ r $.

Let us first treat the case that $ Q $ and $ Q' $ orient $r$ differently, i.e., that~$ \rv\in Q' $. Then $ r $ distinguishes $ Q $ and $ Q' $ and hence $ \abs{r}=\abs{s} $ by the efficiency of $ s $. This implies that $ \{P,P'\} $ and $ \{Q,Q'\} $ are either the same pair or else incomparable in~$ \curlyle $. We may assume further without loss of generality that $ \vs\in Q $ and $ \sv\in Q' $. Consider now the two corner separations $ \vr\join\vs $ and $ \vr\meet\vs $: if at least one of these two has order at most $ \abs{s} $, then this corner separation would distinguish $ Q $ and $ Q' $ by the profile property. The efficiency of $ s $ would then imply that this corner separation has order exactly $ \abs{s} $ and hence lies in~$ A_{Q,Q'} $. The submodularity of the order function implies that this is the case for at least one, and therefore for both of these corner separations, yielding the existence of two corner separations of $ r $ and $ s $ from different sides of $ s $ in~$ A_{Q,Q'} $ and showing that \ref{property:hierarchically2_equal} is satisfied.

Let us now consider the case that $ Q $ and $ Q' $ orient $ r $ in the same way, i.e., that~$ \vr\in Q' $. We make a further split depending on whether $ \abs{r}=\abs{s} $ or~$ \abs{r}<\abs{s} $.

Suppose first that $ \abs{r}=\abs{s} $; then neither $ \{P,P'\} \prec \{Q,Q'\} $ nor $\{Q,Q'\}\prec\{P,P'\}$. We may assume that $ P $ and $ P' $ orient $ s $ in the same way: for if $ P $ and $ P' $ orient $ s $ differently, we may switch the roles of $ r $ and $ s $ as well as $ \{P,P'\} $ and $ \{Q,Q'\} $ and apply the above case. So suppose that both of $ P $ and $ P' $ contain $ \vs $, say. Then neither of the corner separations $\vr\join\sv$ nor $\rv\join\vs$ can have order strictly less than $|r|=|s|$, as these corner separations would distinguish $Q$ and $Q'$ or $P$ and $P'$, respectively, and would therefore contradict the efficiency of $s$ or of~$r$, respectively. The submodularity of $ \abs{\cdot} $ now implies that both of these corner separations have order exactly $ \abs{r}=\abs{s} $ and hence lie in $ A_{Q,Q'} $ and $ A_{P,P'} $, respectively, showing that \ref{property:hierarchically2_equal} holds.

Finally, let us suppose that $ \abs{r}<\abs{s} $; then $ \{P,P'\}\prec\{Q,Q'\} $. Consider the two corner separations $ \vr\join\vs $ and $ \vr\join\vs $: if both of $ \vr\join\vs $ and $ \vr\join\vs $ have order strictly greater than $ \abs{s} $, then by the submodularity of the order function both of the other two corner separations $ \rv\join\vs $ and $ \rv\join\sv $ have order strictly smaller than $ \abs{r} $. By the robustness of $ \P $ one of these two corner separations would distinguish $ P $ and $ P' $, contradicting the efficiency of~$ r $.

Thus we may assume at least one of $ \vr\join\vs $ and $ \vr\join\vs $ has order at most $ \abs{s} $. Then that corner separation distinguishes $ Q $ and $ Q' $. In fact, it does so efficiently and hence lies in $ A_{Q,Q'} $, showing that \ref{property:hierarchically1_diff} holds and concluding the proof.
\end{proof}

We are now ready to deduce the full \cref{thm:Profiles_canon} from \cref{thm:splinter_hierarchically}:

\profilesCanonToT*

\begin{proof}
By \cref{lem:profiles_hierarchically} the family $ \A_\P $ splinters hierarchically. Thus we can apply \cref{thm:splinter_hierarchically} to $ \A_\P $ to obtain a nested set $ N=N(\A_\P) $ which meets every~$ A_{P,P'} $. Clearly, $ N $ satisfies (i), (ii) and (iv) of \cref{thm:Profiles_canon}.

To see that $ N $ satisfies (iii), let $ \alpha $ be an automorphism of $ \vU $. Then the restriction of $ \alpha $ to $ \bigcup_{\{P,P'\}\in I}\vA_{P,P'} $ is an isomorphism of separation systems onto its image in $ \vU $. We therefore have, by \cref{thm:splinter_hierarchically}, that $ \alpha(N(\A_\P))=N(\alpha(\A_\P)) $. For every $ A_{P,P'} $ in $ \A_\P $ we have that $ \alpha(A_{P,P'}) $ is precisely the set of those separations in $ U $ which distinguish $ P^\alpha $ and $ P'^\alpha $ efficiently; in other words, we have $ \alpha(\A_\P)=\A_{\P^\alpha} $, showing that (iii) is satisfied.
\end{proof}

\subsection{Clique separations}\label{sec:appl:cliques_canonical}

Regarding the profiles of clique separations discussed in  \cref{sec:appl:cliques_noncanonical}, \cref{lem:clique_corners} not only suffices to show that the sets $A_{P,P'}$ splinters, but can be used to show that the collection of these $A_{P,P'}$ even splinters hierarchically, allowing us to apply~\cref{thm:splinter_hierarchically}: for this we simply define the same partial order $\curlyle$ on the set of pairs $\{P,P'\}$ as in the previous section, that is, $\{P,P'\} \prec \{Q,Q'\}$ if and only if $|r| < |s|$ for some (equivalently: for all) $ r\in A_{P,P'} $ and~$ s\in A_{Q,Q'} $.

To see this, let $ P,P' $ and $ Q,Q' $ be distinguishable pairs of profiles of clique separations. Let $r\in A_{P,P'} $ and $ s\in A_{Q,Q'}$, and suppose without loss of generality that $|r|\le |s|$. If $r$ and $s$ are nested, then $ r $ and $ s $ themselves are corner separations of $ r $ and $ s $ that lie in $ A_{P,P'} $ and $ A_{Q,Q'} $, respectively. However, if $r$ and $s$ cross, then by \cref{lem:clique_corners} there are orientations of $r$ and $s$ such that ${\abs{\rv\meet\sv},\abs{\rv\meet\vs}\le\abs{r}}$ and ${\abs{\rv\meet\sv},\abs{\rv\meet\vs},\abs{\vr\meet\sv}\le\abs{s}}$. By switching their roles if necessary we may assume that $ \vr\in P $ and $ \rv\in P' $, and likewise that $ \vs\in Q $ and~$ \sv\in Q' $.

Since $(\vr\meet\sv),(\rv\meet\sv) \le \sv$ and $ \sv\in Q' $, the profile $ Q' $ contains both of these corner separations by consistency. On the other hand, by the assumption that $ \abs{r}\le\abs{s} $, the separation $ r $ gets oriented by $ Q $, and consequently by the profile property $ Q $ must contain the inverse of one of those two corner separations. This corner separation then distinguishes $ Q $ and $ Q' $, and in fact it does so efficiently, since its order is at most $ \abs{s} $, meaning that this corner separation lies in $ A_{Q,Q'} $. Therefore, if $ \abs{r}<\abs{s} $, condition~\ref{property:hierarchically1_diff} of splintering hierarchically is satisfied.

So suppose further that $ \abs{r}=\abs{s} $, and let us check that condition~\ref{property:hierarchically2_equal} of splintering hierarchically is satisfied. Observe that, similarly as above, $ P $ orients $ s $, and $ P' $ contains both $ (\vr\meet\sv) $ and $ (\rv\meet\vs) $ by consistency with $ \rv\in P' $, implying as before that one of $(\rv\meet\sv)$ and $(\rv\meet\vs)$ also efficiently distinguishes $P$ and $P'$, i.e., is an element of $A_{P,P'}$. If this corner separation in $ A_{P,P'} $ and the corner separation in $ A_{Q,Q'} $ found above are from different sides of either $ r $ or $ s $, then condition~\ref{property:hierarchically2_equal} of splintering hierarchically would be satisfied. So suppose not; that is, suppose that $ (\rv\meet\sv) $ distinguishes both $ P $ and $ P' $ as well as $ Q $ and $ Q' $ efficiently. In particular $ \abs{\rv\meet\sv}=\abs{r}=\abs{s} $, and hence by the last part of~\cref{lem:clique_corners}, all four corner separations of $ r $ and $ s $ have order at most $ \abs{r} $. Consequently, since $ P' $ orients $ s $, one of $ (\vr\meet\sv) $ and $ (\vr\meet\vs) $ distinguishes $ P $ and $ P' $ efficiently, which one depending on whether $ \vs\in P' $ or $ \sv\in P' $. In either case we have found a corner separation of $ r $ and $ s $ in $ A_{P,P'} $, which together with $ (\rv\meet\sv)\in A_{Q,Q'} $ witnesses that~\ref{property:hierarchically2_equal} is fulfilled.

Therefore, by \cref{thm:splinter_hierarchically} we get that we can choose the set in \cref{thm:cliques} canonically:

\begin{THM}\label{thm:cliques_canon}
    For every set $\P$ of profiles of clique separations of a graph $G$, there is a nested set $N=N(\P)$ of separations which efficiently distinguishes all the distinguishable profiles in $\P$ and is canonical, that is, such that $N(\P^\alpha)= N(\P)^\alpha$ for every automorphism $\alpha$ of the underlying graph $G$.
\end{THM}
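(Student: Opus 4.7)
The plan is to mimic the proof of \cref{thm:Profiles_canon} in \cref{sec:appl:profiles_canonical}, now using the hierarchical splintering established in the paragraphs directly preceding the statement. Concretely, let $I$ be the set of pairs $\{P,P'\}$ of distinguishable profiles in $\P$, let $A_{P,P'}$ be the set of clique separations of $G$ that distinguish $P$ and $P'$ efficiently, and let $\A_\P=(A_{P,P'}\mid \{P,P'\}\in I)$. As in \cref{sec:appl:profiles_canonical} I would partially order $I$ by $\{P,P'\}\curlyle\{Q,Q'\}$ iff the common order of the separations in $A_{P,P'}$ is at most that of those in $A_{Q,Q'}$.

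Next I would invoke the verification carried out just before the theorem: $\A_\P$ splinters hierarchically with respect to this partial order. This is the key input, and it has already been done using \cref{lem:clique_corners}. With this in hand, \cref{thm:splinter_hierarchically} applies and yields a nested set $N(\A_\P)\sub U$ that meets every $A_{P,P'}$. Since each such $A_{P,P'}$ consists of efficient distinguishers of the pair $\{P,P'\}$, the set $N\coloneqq N(\A_\P)$ efficiently distinguishes all distinguishable profiles in~$\P$.

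It remains to establish canonicity. Given an automorphism $\alpha$ of $G$, $\alpha$ induces an automorphism of the universe $\vU(G)$ which preserves the set of clique separations (since it preserves completeness of induced subgraphs) and preserves the order $\abs{\cdot}$. Therefore, restricted to $\bigcup_{\{P,P'\}\in I}\vA_{P,P'}$, $\alpha$ is an isomorphism of separation systems onto its image. Moreover, $\alpha(A_{P,P'})$ is precisely the set of clique separations efficiently distinguishing $P^\alpha$ and $P'^\alpha$, so $\alpha(\A_\P)=\A_{\P^\alpha}$, and the partial order $\curlyle$ is also preserved since orders are preserved. Applying the canonicity clause of \cref{thm:splinter_hierarchically} then gives
\[ \alpha(N(\P))=\alpha(N(\A_\P))=N(\alpha(\A_\P))=N(\A_{\P^\alpha})=N(\P^\alpha), \]
which is the required canonicity.

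There is no real obstacle here: all of the work has been done by the verification of hierarchical splintering preceding the statement and by \cref{thm:splinter_hierarchically} itself. The only small point to watch is the observation that an automorphism of $G$ does indeed act on the relevant separation systems preserving both the clique property and the order function, so that the hypotheses of the canonicity clause of \cref{thm:splinter_hierarchically} are actually met; this is immediate from the definitions.
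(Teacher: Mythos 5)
Your proposal is essentially the paper's own proof: the paper verifies in the paragraphs preceding the theorem that the family of the sets $A_{P,P'}$ of efficient distinguishers splinters hierarchically, and its proof then simply notes that every automorphism of $G$ induces an automorphism of the separation system and applies \cref{thm:splinter_hierarchically}, exactly as you do. One small correction: the partial order must be defined via \emph{strictly} lower order ($\{P,P'\}\prec\{Q,Q'\}$ iff $\abs{r}<\abs{s}$ for $r\in A_{P,P'}$, $s\in A_{Q,Q'}$), not ``at most'', since with the non-strict comparison distinct pairs of equal order would be comparable in both directions, so the relation would not be a partial order and the case $\abs{r}=\abs{s}$ would no longer fall under condition~\ref{property:hierarchically2_equal} as in the verification you cite.
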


\begin{proof}
Every automorphism of $G$ induces an automorphism of the separation system. Hence we can obtain the claimed nested set by applying \cref{thm:splinter_hierarchically} to the family of the sets $A_{P,P'}$ of those clique separations which efficiently distinguish the pair $ P,P' $ of distinguishable profiles in $\P$.
\end{proof}

\subsection{Circle separations}\label{sec:appl:circle_separations}

Another special case of separation systems are those of {\em circle separations} discussed in~\cite{AbstractTangles}: given a fixed cyclic order on a ground-set $ V $, a {\em circle separation} of $ V $ is a bipartition $ (A,B) $ of $ V $ into two disjoint intervals in the cyclic order. Observe that the set of all circle separations is not closed under joins and meets and hence not a sub-universe of the universe of all bipartitions of $ V $:

\begin{EX}\label{ex:circles_not_universe}
    Consider the natural cyclic order on the set $V=\menge{1,2,3,4}$. The bipartitions $(\menge{1}, \menge{2,3,4})$ and $(\menge{3}, \menge{4,1,2})$ of $ V $ are circle separations. However, their supremum in the universe of all bipartitions of $ V $ is $ (\menge{1,3},\menge{2,4}) $, which is not a circle separation.
\end{EX}

Let $ V $ be a ground-set with a fixed cyclic order and ${U=(\vU,\le,{}^*,\join,\meet,|\cdot|)}$ the universe of all bipartitions of $ V $ with a submodular order function $ |\cdot| $. Let $ S\sub U $ be the set of all separations in $ U $ that are circle separations of $ V $. Consequently we denote by $ S_k $ the set of all those circle separations in $ S $ whose order is $ {<k} $.

Given fixed integers $ m \ge 1 $ and $ n > 3 $, we call a consistent orientation of $ S_k $ a {\em $ k $-tangle} in $ S $ if it has no subset in
\[ \F = \F_m^n\coloneqq\menge{F\sub 2^\vU \;\bigg|\; \big\lvert{\textstyle  \bigcap_{(A,B)\in F}B}\big\rvert <m\text{ and }\abs{F}<n}. \]
A {\em tangle} in $ S $ is then a $ k $-tangle for some $ k $, and a {\em maximal tangle} in $ S $ is a tangle not contained in any other tangle in $ S $. As usual, two tangles are {\em distinguishable} if neither of them is a subset of the other; a separation $ s $ {\em distinguishes} two tangles if they orient $ s $ differently, and $ s $ does so {\em efficiently} if it is of minimal order among all separations in $ S $ distinguishing that pair of tangles.

Using~\cref{thm:splinter_hierarchically} we can show that there is a canonical nested set of circle separations which efficiently distinguishes all distinguishable tangles in $ S $:

\begin{THM}\label{thm:circles}
    The set $S$ of all circle separations of $V$ contains a tree set $T=T(S)$ that efficiently distinguishes all distinguishable tangles of $S$. Moreover, this tree set $T$ can be chosen canonically, i.e., so that for every automorphism $\alpha$ of $S$ we have $T(S^\alpha)=T(S)^\alpha$.  
\end{THM}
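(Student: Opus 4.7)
The plan is to derive \cref{thm:circles} from the Canonical Splinter Lemma (\cref{thm:splinter_hierarchically}) in exact analogy with the derivation of \cref{thm:Profiles_canon} in \cref{sec:appl:profiles_canonical}. Letting $I$ denote the set of pairs of distinguishable tangles of $S$, I would define for each $\{P,P'\}\in I$ the set
\[ A_{P,P'}\coloneqq\menge{a\in S\mid a\tn{ efficiently distinguishes }P\tn{ and }P'} \]
and equip $I$ with the partial order $\{P,P'\}\curlyle\{Q,Q'\}$ iff $\abs{r}\le\abs{s}$ for some (equivalently, all) $r\in A_{P,P'}$ and $s\in A_{Q,Q'}$. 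Once the family $\A\coloneqq(A_{P,P'}\mid\{P,P'\}\in I)$ is shown to splinter hierarchically with respect to $\curlyle$, \cref{thm:splinter_hierarchically} delivers the desired canonical tree set; canonicity with respect to automorphisms of $S$ then follows exactly as in the proof of \cref{thm:Profiles_canon}.

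The task reduces to verifying the splintering, and the key observation is that the tangle property furnishes direct analogues of the profile property and of robustness. Since $n>3$ and $m\ge1$, whenever $\vr,\vs\in P$ for a tangle $P$ and the meet $\rv\meet\sv$ happens to lie in $S$, the triple $\{\vr,\vs,\rv\meet\sv\}$ has second coordinates intersecting in $B_r\cap B_s\cap(A_r\cup A_s)=\emptyset$ (because $A\cap B=\emptyset$ in every bipartition), so it sits in $\F_m^n$ and therefore $\rv\meet\sv\notin P$. The same telescoping computation applied to $\{\rv,\vr\meet\vs,\vr\meet\sv\}$ shows that whenever $\rv\in P$, the tangle $P$ cannot contain both meet-corners $\vr\meet\vs$ and $\vr\meet\sv$, yielding the robustness analogue.

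Combining these two properties with the structural submodularity of $S$ established in~\cite{AbstractTangles} and the submodularity of the order function on the ambient universe $U$ of all bipartitions of $V$, the verification of conditions~\ref{property:hierarchically1_diff} and~\ref{property:hierarchically2_equal} proceeds essentially verbatim as in \cref{lem:profiles_hierarchically}: for crossing $r\in A_{P,P'}$ and $s\in A_{Q,Q'}$ with $\abs{r}\le\abs{s}$, if $Q$ and $Q'$ orient $r$ oppositely then $r\in A_{Q,Q'}$ and one is in the equal-order incomparable case; otherwise structural submodularity produces corner separations of $r$ and $s$ in $S$ whose orders are controlled by the submodularity of $\abs{\cdot}$, and the tangle-property consequences above supply an efficient distinguishing corner in $A_{Q,Q'}$ and, in the equal-order case, also one in $A_{P,P'}$.

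The main obstacle is that $S$ is not closed under joins and meets (\cref{ex:circles_not_universe}), so the argument cannot freely invoke all four corner separations of $r$ and $s$ as in the ambient-universe setting of \cref{lem:profiles_hierarchically}; the case analysis must be organised around which of the four corner bipartitions are guaranteed by structural submodularity to lie in $S$. Once this bookkeeping is handled and the profile-like and robustness-like properties are substituted for the profile property and robustness respectively, the argument adapts directly.
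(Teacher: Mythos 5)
Your plan is, in outline, exactly the paper's: the same sets $A_{P,P'}$ of efficient distinguishers, an order on the pairs by the order of their distinguishers, verification of hierarchical splintering in which consequences of the tangle property replace the profile property and robustness (your two triples $\{\vr,\vs,\rv\meet\sv\}$ and $\{\rv,\vr\meet\vs,\vr\meet\sv\}$, with empty intersection of the right-hand sides, are precisely the sets in $\F$ that the paper invokes), followed by \cref{thm:splinter_hierarchically} and the canonicity argument from \cref{thm:Profiles_canon}. One small correction: define the order by \emph{strict} inequality of orders, since with `$\abs{r}\le\abs{s}$' your relation is not antisymmetric, and equal-order pairs must count as incomparable so that it is condition~\ref{property:hierarchically2_equal}, not~\ref{property:hierarchically1_diff}, that you verify for them.

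The substantive issue is the point you defer as ``bookkeeping'': this is where the argument needs an actual new ingredient, and the tools you cite (structural submodularity of $S$ plus submodularity of $\abs{\cdot}$ on $U$) do not make the transfer of \cref{lem:profiles_hierarchically} ``essentially verbatim''. For instance, in the case $\abs{r}<\abs{s}$ with $\vr\in Q\cap Q'$ you need either a corner of the form $\vr\join\vs$ or $\vr\join\sv$ of order at most $\abs{s}$ that actually lies in $S$ (tangles only orient circle separations, and only circle separations may enter $A_{Q,Q'}$), or else both $\vr\meet\vs$ and $\vr\meet\sv$ in $S$ of order less than $\abs{r}$, so that your robustness-analogue triple yields a contradiction. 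Structural submodularity only guarantees one unspecified corner from each opposite pair, and it carries no information about orders; it does not exclude, say, $\vr\join\vs\notin S$ despite $\abs{\vr\join\vs}\le\abs{s}$, in which case submodularity of $\abs{\cdot}$ no longer bounds $\abs{\vr\meet\vs}$ and both alternatives stall. What closes this gap --- and makes your ``main obstacle'' disappear --- is the elementary but essential fact the paper isolates as \cref{lem:circle_corners}: if two circle separations \emph{cross}, then all four of their corner separations are again circle separations (the failure of closure in \cref{ex:circles_not_universe} occurs for nested separations only). With that lemma stated and proved, all four corners are available and oriented by the relevant tangles, and your sketch does go through essentially as in \cref{lem:profiles_hierarchically}; without it, the proposal has a genuine gap exactly where it promises the details will work out.
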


In order to prove \cref{thm:circles} we need the following short lemma:

\begin{LEM}\label{lem:circle_corners}
Let $r$ and $s$ be two circle separations of $ V $. If $r$ and $s$ cross, then all four corner separations of $r$ and $s$ are again circle separations.
\end{LEM}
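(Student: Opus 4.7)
The plan is to analyse two crossing circle separations directly in terms of the geometry of the underlying cycle. I will write $r=\{A_1,A_2\}$ and $s=\{B_1,B_2\}$ with all four parts $A_1,A_2,B_1,B_2$ being intervals in the cyclic order, and identify each bipartition with its two \emph{cut points}, i.e.\ the two gaps in the cyclic order that separate its two intervals.

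The first step I would carry out is to deduce, from the assumption that $r$ and $s$ cross, that their cut points interleave around the cycle. For bipartitions, crossing means that all four intersections $A_i\cap B_j$ are non-empty. Conversely, if both cut points of $r$ lay inside a single arc of $s$, say $B_1$, then $B_2$ would sit entirely in one of $A_1$ or $A_2$, making one of $A_1\cap B_2$ or $A_2\cap B_2$ empty. Hence non-emptiness of all four intersections forces interleaving.

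Having the interleaving, I would fix the four cut points in cyclic order, alternating between cut points of $r$ and cut points of $s$; they split $V$ into four arcs, and by construction each arc is precisely one of the four intersections $A_i\cap B_j$. Thus the four sets $A_i\cap B_j$ themselves are intervals and they appear consecutively around the cycle. At this point the statement of the lemma is essentially immediate: an arbitrary corner separation has the form $(A_i\cup B_j,\,A_{3-i}\cap B_{3-j})$ as $(i,j)$ ranges over $\{1,2\}^2$, and in the four-arc decomposition its small side $A_{3-i}\cap B_{3-j}$ is a single arc while its large side $A_i\cup B_j$ is the union of the remaining three consecutive arcs. Both sides are therefore intervals, so the corner separation is a circle separation.

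The only mildly subtle step is the passage from ``crossing'' to ``cut points interleave''; the rest is bookkeeping with four arcs and requires no further structural input about $U$ beyond the interval description of circle separations. I do not expect any real obstacle, though choosing notation carefully for the four cut points will be essential to keep the four-arc picture clean.
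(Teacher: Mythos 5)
Your proposal is correct and follows essentially the same route as the paper: the paper's proof also uses the fact that crossing forces the corner regions (e.g.\ $A\cap C$ and $B\cap D$) to be non-empty intervals, whence the other side $B\cup D$, being the complement of an arc, is again an interval. You merely make explicit (via the interleaving of the four cut points) the step that the paper asserts without detail, so there is nothing to add beyond noting that your four-arc bookkeeping is a fleshed-out version of the published two-line argument.
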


\begin{proof}
Let $\vr=(A,B)$ and $ \vs=(C,D)$. Since $r$ and $s$ cross, the sets $A\cap C$ and $B\cap D$ are non-empty and moreover intervals in the cyclic order. Thus $B\cup D$ is also an interval and therefore $\vr\meet\vs=(A \cap C\,,\,B \cup D)$ is indeed a circle separation. 
\end{proof}

Let us now prove \cref{thm:circles}.

\begin{proof}[Proof of \cref{thm:circles}]
For every pair $  P,P' $ of distinguishable tangles in $ S $ let $A_{ P, P'}$ be the set of all circle separations that efficiently distinguish $ P$ and $ P'$. We define a partial order~$ \curlyle $ on the set of all pairs of distinguishable tangles by letting $\menge{ P, P'}\prec\menge{ Q, Q'}$ for two distinct such pairs if and only if the separations in $A_{ P, P'}$ have strictly lower order than those in~$A_{ Q, Q'}$.

Let us show that the collection of these sets $ A_{ P, P'} $ splinters hierarchically; the claim will then follow from \cref{thm:splinter_hierarchically}.

For this let $ P,P'$  and $  Q, Q'$ be two distinguishable pairs of tangles in $ S $ and let $ r\in A_{ P, P'}$ and $ s\in A_{ Q, Q'}$. If $r$ and $s$ are nested, then $ r $ and $ s $ themselves are corner separations from different sides of $ r $ and $ s $ that lie in $ A_{ P, P'} $ and $ A_{ Q, Q'} $, respectively, in which case there is nothing to show.

So suppose that $ r $ and $ s $ cross. Then by \cref{lem:circle_corners} all corner separations of $ r $ and $ s $ are circle separations. By switching their roles if necessary we may assume that~$|r|\le|s|$; we shall treat the cases of $ |r|<|s|$ and $ |r|=|s| $ separately.

Let us first consider the case that $|r|<|s|$. Then $ \{ P, P'\}\prec\{ Q, Q'\} $, so it suffices to show that \ref{property:hierarchically1_diff} is satisfied, i.e., to find a corner separation of $ r $ and $ s $ in $ A_{ Q, Q'} $. Since $ Q$ and $ Q'$ both orient $s$, which is of higher order than $ r $, both $  Q $ and $  Q' $ also orient $r$. By $ |r|<|s| $ and the efficiency of $ s $, $ r $ cannot distinguish $  Q $ and $  Q' $. Thus some orientation $ \vr $ of $ r $ lies in both $  Q $ and $  Q' $.

By renaming them if necessary we may assume that $ \vr\in P $ and $ \rv\in P' $. Suppose now that one of $ \vr\join\vs $ and $ \vr\join\sv $ has order at most $ |s| $. Then $  Q $ and $  Q' $ would both orient that corner separation, and they would do so differently by the definition of a tangle. Thus that corner separation would lie in $ A_{ Q, Q'} $, as desired.

Hence we may assume that both of $ \vr\join\vs $ and $ \vr\join\sv $ have order higher than $ |s| $. Then, by submodularity, both $ \vr\meet\vs $ and $ \vr\meet\sv $ have order less than $ |r| $. Therefore both of these corner separations get oriented by $  P $ and $  P' $, but neither of them can distinguish $  P $ and $  P' $ by the efficiency of $ r $. In fact by the consistency of $  P $ and $  P' $ we must have $ (\vr\meet\vs),(\vr\meet\sv)\in P\cap P' $. However the set $ \{\rv,\,(\vr\meet\vs),\,(\vr\meet\sv)\} $ lies in $ \F $, contradicting the assumption that $  P $ and $  P' $ are tangles in $ S $.

It remains to deal with the case that $ |r|=|s| $ and show that \ref{property:hierarchically2_equal} is satisfied. For this we shall find corner separations from different sides of $ r $ or of $ s $ that lie in $ A_{ P, P'} $ and $ A_{ Q, Q'} $, respectively. By the submodularity of the order function, and by switching the roles of $ r $ and $ s $ if necessary, we may assume that there are orientations of $ r $ and $ s $ such that both $ \vr\join\vs $ and $ \vr\join\sv $ have order at most $ |r| $. By possibly renaming $ \vs $ and $ \sv $ we may further assume that $ \vr\join\vs $ distinguishes $  P $ and $  P' $. Then, by the efficiency of $ r $, we must have $ |\vr\join\vs|=|r| $, and hence $ |\rv\join\sv|\le|s| $ by submodularity. Recall that we assumed $ |\vr\join\sv|=|r|=|s| $, so one of $ \rv\join\sv $ and $ \vr\join\sv $ must distinguish $  Q $ and $  Q' $. Again, that corner separation must in fact distinguish $  Q $ and $  Q' $ efficiently, i.e., lie in $ A_{ Q, Q'} $. Now this corner separation together with $ \vr\join\vs $ witnesses that \ref{property:hierarchically2_equal} holds.
\end{proof}

\begin{bibdiv}
\addcontentsline{toc}{section}{References}
\begin{biblist}

\bib{berry2010}{article}{
   author={Berry, Anne},
   author={Pogorelcnik, Romain},
   author={Simonet, Genevi\`eve},
   title={An introduction to clique minimal separator decomposition},
   journal={Algorithms (Basel)},
   volume={3},
   date={2010},
   number={2},
   pages={197--215},
   issn={1999-4893},
   doi={10.3390/a3020197},
}

\bib{SeparationsOfSets}{article}{
  title={Separations of sets},
  author={Bowler, Nathan},
  author={Kneip, Jakob},
   journal={Order},
   volume={34},
   pages={411--425},
  date={2020},
  doi={10.1007/s11083-019-09512-4}
}

\bib{ShortToT}{article}{
   author={Carmesin, Johannes},
   title={A short proof that every finite graph has a tree-decomposition
   displaying its tangles},
   journal={European J.\ Combin.},
   volume={58},
   date={2016},
   pages={61--65},
   issn={0195-6698},
   doi={10.1016/j.ejc.2016.04.007},
}

\bib{confing}{article}{
   author={Carmesin, Johannes},
   author={Diestel, Reinhard},
   author={Hundertmark, Fabian},
   author={Stein, Maya},
   title={Connectivity and tree structure in finite graphs},
   journal={Combinatorica},
   volume={34},
   date={2014},
   number={1},
   pages={11--45},
   issn={0209-9683},
   doi={10.1007/s00493-014-2898-5},
}

\bib{coudert:clique_decomp}{report}{
    author={Coudert, David},
    author={Ducoffe, Guillaume},
    title={Clique-decomposition revisited},
    organization={INRIA Sophia Antipolis - I3S},
    date={2016},
    note={available at \href{https://hal.archives-ouvertes.fr/hal-01266147/}{hal-01266147}},
}

\bib{Diestel5}{book}{
   author={Diestel, Reinhard},
   title={Graph theory},
   series={Graduate Texts in Mathematics},
   volume={173},
   edition={5},
   publisher={Springer, Berlin},
   date={2017},
   pages={xviii+428},
}

\bib{AbstractSepSys}{article}{
   author={Diestel, Reinhard},
   title={Abstract separation systems},
   journal={Order},
   volume={35},
   date={2018},
   number={1},
   pages={157--170},
   issn={0167-8094},
   doi={10.1007/s11083-017-9424-5},
}

\bib{TreeSets}{article}{
   author={Diestel, Reinhard},
   title={Tree sets},
   journal={Order},
   volume={35},
   date={2018},
   number={1},
   pages={171--192},
   issn={0167-8094},
   doi={10.1007/s11083-017-9425-4},
}

\bib{ProfilesNew}{article}{
   author={Diestel, Reinhard},
   author={Hundertmark, Fabian},
   author={Lemanczyk, Sahar},
   title={Profiles of separations: in graphs, matroids, and beyond},
   journal={Combinatorica},
   volume={39},
   date={2019},
   number={1},
   pages={37--75},
   issn={0209-9683},
   doi={10.1007/s00493-017-3595-y},
}

\bib{ProfileDuality}{article}{
   author={Diestel, Reinhard},
   author={Eberenz, {Ph}ilipp},
   author={Erde, Joshua},
   title={Duality theorems for blocks and tangles in graphs},
   journal={SIAM J. Discrete Math.},
   volume={31},
   date={2017},
   number={3},
   pages={1514--1528},
   issn={0895-4801},
   doi={10.1137/16M1077763},
}

\bib{AbstractTangles}{article}{
   author={Diestel, Reinhard},
   author={Erde, Joshua},
   author={Wei{\ss}auer, Daniel},
   title={Structural submodularity and tangles in abstract separation systems},
   journal={J.\ Combin.\ Theory Ser.\ A},
   volume={167},
   date={2019},
   pages={155--180},
   issn={0097-3165},
   doi={10.1016/j.jcta.2019.05.001},
}

\bib{ProfiniteSS}{article}{
  author={Diestel, Reinhard},
  author={Kneip, Jakob},
  title={Profinite separation systems},
  journal={Order},
  volume={37},
  pages={179-205},
  date={2020},
  doi={10.1007/s11083-019-09499-y}
}

\bib{DualityAbstract}{article}{
    author={Diestel, Reinhard},
    author={Oum, Sang{-}il},
    title={Tangle-tree duality in abstract separation systems},
    journal={Advances in Mathematics},
    volume={377},
    doi={10.1016/j.aim.2020.107470},
    pages={107470},
    date={2021}
}

\bib{Duality2}{article}{
    author={Diestel, Reinhard},
    author={Oum, Sang{-}il},
    title={Tangle-Tree Duality: In Graphs, Matroids And Beyond},
    journal={Combinatorica},
    date={2019},
    issn={1439-6912},
    doi={10.1007/s00493-019-3798-5},
}

\bib{ML}{article}{
    author={Diestel, Reinhard},
    author={Whittle, Geoff},
    title={Tangles and the Mona Lisa},
    eprint={1603.06652},
    date={2016}
}

\bib{CanonSubmod}{article}{
    author = {Elbracht, {Chr}istian},
    author = {Kneip, Jakob},
    title = {A canonical tree-of-tangles theorem for submodular separation systems},
    eprint = {2009.02091},
    date={2020}
}

\bib{InfiniteSplinters}{article}{
    author = {Elbracht, {Chr}istian},
    author = {Kneip, Jakob},
    author = {Teegen, Maximilian},
    title = {Trees of tangles in infinite abstract separation systems},
    eprint = {2005.12122},
    date={2020}
}

\bib{ToTviaTTD}{article}{
    author = {Elbracht, {Chr}istian},
    author = {Kneip, Jakob},
    author = {Teegen, Maximilian},
    title = {Obtaining trees of tangles from tangle-tree duality},
    eprint = {2011.09758},
    date={2020}
}

\bib{InfiniteTangles}{article}{
    author = {Elm, Ann-Kathrin},
    author = {Kurkofka, Jan},
    title = {A tree-of-tangles theorem for infinite tangles},
    eprint = {2003.02535},
    date={2020}
}

\bib{TreelikeSpaces}{article}{
    author = {Gollin, Pascal},
    author = {Kneip, Jakob},
    title = {Representations of infinite tree sets},
    journal = {Order},
    date = {2020},
    doi = {10.1007/s11083-020-09529-0},
}

\bib{halin}{article}{
    author={Halin, Rudolf},
    title={{\"U}ber simpliziale Zerf{\"a}llungen beliebiger (endlicher oder unendlicher) Graphen},
    date={1964},
  journal={Math.\ Ann.},
  volume={156},
  number={3},
  pages={216--225},
  doi={10.1007/BF01363288},
}


\bib{LEIMER1993}{article}{
   author={Leimer, Hanns-Georg},
   title={Optimal decomposition by clique separators},
   journal={Discrete Math.},
   volume={113},
   date={1993},
   number={1-3},
   pages={99--123},
   issn={0012-365X},
   doi={10.1016/0012-365X(93)90510-Z},
}

\bib{GMX}{article}{
   author={Robertson, Neil},
   author={Seymour, P. D.},
   title={Graph minors. X. Obstructions to tree-decomposition},
   journal={J.\ Combin.\ Theory Ser.\ B},
   volume={52},
   date={1991},
   number={2},
   pages={153--190},
   issn={0095-8956},
   doi={10.1016/0095-8956(91)90061-N},
}

\bib{tarjan1985}{article}{
    author={Tarjan, Robert E.},
    title={Decomposition by clique separators},
    journal={Discrete Math.},
    volume={55},
    date={1985},
    number={2},
    pages={221--232},
    issn={0012-365X},
    doi={10.1016/0012-365X(85)90051-2},
}

\bib{wagner}{article}{
    author={Wagner,  Klaus},
    title={\"Uber eine Eigenschaft der ebenen Komplexe},
    journal={Math.\ Ann.},
    volume={114},
    date={1937},
    pages={570--590},
    doi={10.1007/BF01594196},
}
\end{biblist}
\end{bibdiv}

\vspace{1cm}
\noindent
\begin{minipage}{\linewidth}
 \raggedright\small
   \textbf{Christian Elbracht},
   Universität Hamburg,
   Hamburg, Germany \\
   \texttt{christian.elbracht@uni-hamburg.de}

   \textbf{Jakob Kneip},
   Universität Hamburg,
   Hamburg, Germany \\
   \texttt{jakob.kneip@uni-hamburg.de}
   
   \textbf{Maximilian Teegen},
   Universität Hamburg,
   Hamburg, Germany \\
   \texttt{maximilian.teegen@uni-hamburg.de}
\end{minipage}

\end{document}